\documentclass{article}
\usepackage[utf8]{inputenc}
\usepackage{graphicx}
\usepackage{mathtools,amsmath,amssymb,amsfonts,amsthm,upgreek}
\usepackage{mathalfa}
\usepackage{tikz,tikz-cd}
\usetikzlibrary{arrows}
\usetikzlibrary{decorations.markings}
\tikzset{degil/.style={line width=0.5pt,double distance=5pt,
        decoration={markings,
        mark= at position 0.5 with {
              \node[transform shape] (tempnode) {$\backslash\backslash$};
              %\draw[thick] (tempnode.north east) -- (tempnode.south west);
              }
          },
          postaction={decorate}
}
}

\tikzset{
commutative diagrams/.cd,
arrow style=tikz,
diagrams={>=open triangle 45, line width=0.5pt}}

\usetikzlibrary{calc}
\usepackage{color}
\usepackage{url}
\usepackage[backgroundcolor=white,linecolor=yellow,bordercolor=yellow,textcolor=black,textsize=small]{todonotes}
\usepackage[margin=0.5in]{geometry}

\usepackage{enumitem}

\graphicspath{{./figures/}}

\usepackage[english]{babel}

\usepackage{tabularx}

\usepackage{xcolor}

\usepackage{verbatim}
\usepackage{xspace}

\usepackage{amsfonts}
\usepackage{mathtools}
\usepackage{amsmath}

\usepackage{amssymb}
\usepackage{soul} % For strike out text. We use \st command from this package

\usepackage{setspace}

\setlength{\textfloatsep}{0.1 cm}
\usepackage{fullpage}
\usepackage{ifthen}
\usepackage{tikz}
\usetikzlibrary{automata,positioning, arrows}

\theoremstyle{plain}
\newtheorem{thm}{Theorem}
\newtheorem{lemma}{Lemma}
\newtheorem{prop}{Proposition}
\newtheorem{cor}{Corollary}

\theoremstyle{definition}

\newtheorem{defn}{Definition}
\newtheorem{conjecture}{Conjecture}

\theoremstyle{remark}

\newtheorem{rem}{Remark}
\newtheorem{assumption}{Assumption}

\newenvironment{example}[1][0]
{ % if no argument are passed starts a new myExample environment
  \ifthenelse{\equal{#1}{0}}{
  \myexample
}
{ % if the an argument is passed, then set the Example number
  \myexample
  % and decrease the example counter by 1
  \addtocounter{myexample}{-1}
}
}
{\endmyexample}

\newcommand{\R}{\mathbb{R}}{}
\newcommand{\N}{\mathbb{N}}

\newcommand{\cA}{\mathcal{A}}

\newcommand{\cC}{\mathcal{C}}

\newcommand{\cI}{\mathcal{I}}
\newcommand{\cK}{\mathcal{K}}
\newcommand{\cL}{\mathcal{L}}
\newcommand{\cKL}{\mathcal{KL}}

\newcommand{\cN}{\mathcal{N}}

\newcommand{\cF}{\mathcal{F}}
\newcommand{\cR}{\mathcal{R}}
\newcommand{\cS}{\mathcal{S}}

\newcommand{\cU}{\mathcal{U}}

\newcommand{\cLL}{\text{Lip}}
\newcommand{\sol}{\Sigma}

\newcommand{\wt}{\widetilde}

\newcommand{\dw}{\textit{dw}}
\newcommand{\ad}{\textit{adw}}
\newcommand{\GAS}{\tau\text{-UGB}}
\newcommand{\ES}{\tau\text{-UGEB}_\rho}

\DeclarePairedDelimiterX{\inp}[2]{\langle}{\rangle}{#1, #2}

\newcommand{\restr}[2]{{% we make the whole thing an ordinary symbol
  \left.\kern-\nulldelimiterspace % automatically resize the bar with \right
  #1 % the function
  \vphantom{\big|} % pretend it's a little taller at normal size
  \right|_{#2} % this is the delimiter
  }}

\title{Converse Lyapunov Results for Stability of Switched Systems with Average Dwell-Time}% At most 5 thanks
%\title{Characterization of Stability of Switched Systems with Average Dwell-Time using Multiple Lyapunov Functions}\thanks{...}% At most 5 thanks

\author{Matteo Della Rossa$^*$
  \and  ~Aneel Tanwani \thanks{M.D.R. is with Dipartimento di Scienze Matematiche Informatiche e Fisiche (DMIF), University of Udine, Udine, (Italy). A.T. is with CNRS – LAAS, University of Toulouse, CNRS, 31400 Toulouse, (France)
        {\small matteo.dellarossa@uniud.it}}}

\date{\today}
\begin{document}

\maketitle

\begin{abstract}
This article provides a characterization of stability for switched nonlinear systems under average dwell-time constraints, in terms of necessary and sufficient conditions involving multiple Lyapunov functions. Earlier converse results focus on switched systems with dwell-time constraints only, and the resulting inequalities depend on the flow of individual subsystems. With the help of a counterexample, we show that a lower bound that guarantees stability for dwell-time switching signals may not necessarily imply stability for switching signals with same lower bound on the average dwell-time. Based on these two observations, we provide a converse result for the average dwell-time constrained systems in terms of inequalities which do not depend on the flow of individual subsystems and are easier to check. The particular case of linear switched systems is studied as a corollary to our main result.
\end{abstract}

\section*{Introduction}

Switched systems comprise a family of dynamical subsystems and a switching signal that determines the active subsystem at any given time instant. Early research on the stability of switched systems mostly focused on developing necessary and sufficient conditions using the framework of Lyapunov functions. Due to their peculiar structure, it is natural to develop stability conditions using the Lyapunov functions for individual subsystems as ``building blocks''. Similarly, in case the switched system is unstable for certain class of switching signals, it is natural to look for design of switching signals which stabilize the overall system. All these problems are relatively well-studied by now, but several aspects of these problems are still being investigated in depth to get more insights.

When the individual subsystems share a common equilibrium point, and each subsystem has its own Lyapunov function relative to that equilibrium, then it is natural to look for a class of switching signals for which the stability of the switched system is guaranteed. In such cases, we commonly look for switching signals constrained by imposing a {\em dwell-time} between two consecutive switches, so that for a class of switching signals satisfying certain lower bound on dwell-time, the resulting switched system is uniformly globally asymptotically stable~\cite{Morse}. A generalization of this concept is obtained in terms of {\em average dwell-time} constrained signals which allow for finitely many rapid switches (called chattering bound) while the system respects a lower bound between the switching instants on average~\cite{HesMor99}. Under a compatibility condition on the Lyapunov functions, we can find lower bounds on the {\em average dwell-time} in terms of system data and parameters of the Lyapunov functions that ensure stability \cite{HesMor99}, \cite[Chapter~3]{Lib03}. Several generalizations and refinements of this line of research have been pursued in \cite{Vu07, MullLibe12, MancHaim20, LiuTanwLib} and references therein. A common element of this line of research is that a sufficient condition for stability is provided using multiple Lyapunov functions when the switching signals are constrained in some way. These constraints depend on the assumptions imposed on the vector fields of individual subsystems, and in the case all of them are asymptotically stable, we compute the lower bounds on the average dwell-time of the switching signals for which the overall switched system is asymptotically stable. Numerical algorithms for computing such multiple Lyapunov functions while minimizing the lower bounds on average dwell-time have been proposed in~\cite{HafTan23}. Surprisingly enough, the necessity of such conditions, or the converse Lyapunov results for such systems have not been developed.
% This method works for a relatively broad class of systems but the experiments show that a price to pay with this method is that it often yields conservative results.

On the other hand, in the study of stability conditions over the restricted class of \emph{dwell-time} signals (not in average), researchers have also used  the particular structure of systems to provide bounds (or, tightest possible bounds in some cases) for dwell-time ensuring stability of the overall switched system. In the linear case, the paper~\cite{Wirth2005} provides a complete characterization in terms of necessary and sufficient conditions involving Lyapunov functions for stability under dwell-time constrained switching signals. The necessity of such conditions has to be underlined since it provides a \emph{characterization} of stability for switched linear systems over the class of dwell-time signals, as was already provided in the case of arbitrary switching (see~\cite{MolPya89},~\cite{DayaMar99} for the linear case,~\cite{Mancilla00} for the nonlinear counterpart).  Results based on similar structure/reasoning have been used to get bounds on dwell-time for switched linear systems using linear matrix inequalities (LMIs)~in \cite{GerCol06}, via quadratic Lyapunov functions. Along similar lines, we have LMIs based on discretization in~\cite{AllSha11} and bilinear matrix inequalities using polyhedral norms in~\cite{BlaCol10}. A different numerical technique based on the use of sum-of-squares formulations of the inequalities to compute the dwell-time is proposed in \cite{CheCol12}. More recently, in~\cite{ChiGugPro21},~\cite{DelPaq22} the authors address the question of finding  bounds on stabilizing dwell-times via graph-theory based approaches. A nonlinear version of the aforementioned Lyapunov characterization of stability over dwell-time signals was recently derived in \cite{DellaRos23}. Summarizing, in the case of fixed dwell-time signals, we have a rather mature Lyapunov theory, providing not only sufficient conditions for stability but also~\emph{converse} statements, that leads to completing the picture from analysis perspective.

 In the two broad research directions discussed above (\emph{average} dwell-time and dwell-time stability analysis), one major difference between the conditions proposed in~\cite{Wirth2005} (and the related literature) and the ones used in~\cite{HesMor99} (and subsequent developments) is due to the inequality that relates different Lyapunov functions associated with different modes (see the subsequent~\eqref{eq:JumpCondNORMNonLinear} and~\eqref{eq:JumpNonlinear} for comparison). The inequalities in \cite{Wirth2005} and in its extensions require the explicit knowledge of solutions of individual vector fields, and this solution/flow map is not readily available for many systems. Even in the restricted linear case, the exponential map $A\mapsto e^{At}$ has the limitation of being (componentwise) non-convex, for any $t>0$, and this can lead to numerical restrictions, see the discussion provided in~\cite{AllSha11} and~\cite[Chapter 1]{Geromel23}. On the other hand, the sufficient conditions proposed in \cite{HesMor99}, and later generalized in \cite{LiuTanwLib}, for stability under average dwell-time signals are numerically easier to check because they do not rely on the flow map of individual subsystems. Another elegant aspect of the Lyapunov inequalities in \cite{HesMor99} is that they are independent of the chattering bound and allow us to infer stability over a broader class of switching signals with certain lower bound on average dwell-time and arbitrarily large (but finite) chattering bound. Moreover, in the linear case, these conditions are convex in terms of systems data because they are not directly affected by the exponential matrices associated to the subsystems, and are thus robust in terms of perturbations to system matrices.  
Nevertheless, for the dwell-time constrained switching signals, the conditions in \cite{HesMor99} provide more conservative lower bounds than the ones obtained from \cite{Wirth2005}, as we explicitly observe in this manuscript with the help of an academic example.

 A natural question based on these observations is whether the conditions in \cite{HesMor99} or their nonlinear counterpart in \cite{LiuTanwLib} are also necessary for a tailored notion of stability over the class of average dwell-time signals. 
 The main result of this paper provides an affirmative answer to this converse question when the stability notion under consideration is described by a certain class of $\cKL$ functions. More precisely, this work provides a characterization of a tailored stability notion under \emph{average} dwell-time constraints, with necessary and sufficient conditions in terms of multiple Lyapunov functions. %basically borrowed from~\cite{HesMor99} and further developments (notably~\cite{LiuTanwLib}).

The rest of this manuscript is organized as follows: In Section~\ref{sec:SystemClass}, we introduce the considered classes of systems and switching signals, while in Section~\ref{sec:Stability}, we illustrate the studied notions of stability for switched systems, together with some preliminary results and observations. In Section~\ref{sec:MainResult}, we present our main result comprising a \emph{converse} Lyapunov result for stability over the class of average dwell-time signals. In Section~\ref{sec:Linear}, we specialize our results in the more structured linear subsystems case while Section~\ref{sec:conclu} concludes the paper with some closing remarks and an open conjecture.  Some technical arguments are postponed to the Appendix to avoid breaking the flow of the presentation.
\\
\textbf{Notation:}
The set $\R_+:=\{s\in \R\;\vert\;s\geq0\}$ denotes the set of non-negative real numbers.
Given $m,n\in \N$, the classes $\cC(\R^n,\R^m)$ and $\cC^1(\R^n,\R^m)$ denote the sets of continuous and continuously differentiable functions, respectively. The set $\cLL(\R^n,\R^m)$ denotes the set of locally Lipschitz continuous functions from $\R^n$ to $\R^m$, while  $\cLL_0(\R^n,\R^m)$ denotes the set of functions from $\R^n$ to $\R^m$ which are locally Lipschitz on the open set $\R^n\setminus\{0\}$.\\
% Similarly, $\cC^1(\R^n \setminus\{0\},\R^m)$ denotes continuous functions from $\R^n \to \R^m$ that are continuous differentiable on $\R^n\setminus \{0\}$. } \\
\emph{Comparison Functions Classes}:   A function $\alpha:\R_+\to \R$ is of \emph{class $\cK$} ($\alpha \in \cK$) if it is continuous, $\alpha(0)=0$, and strictly increasing; it is of \emph{class $\cK_\infty$} if, in addition, it is unbounded. A continuous function $\beta:\R_+\times \R_+\to \R_+$ is of \emph{class $\mathcal{KL}$} if $\beta(\cdot,t)$ is of class $\cK$ for all $s$, and $\beta(r,\cdot)$ is decreasing and $\beta(r,t)\to 0$ as $t\to\infty$, for all $r\in \R_+$. 

\section{Systems Class and Switching Signals}\label{sec:SystemClass}

To describe the class of dynamical systems studied in this paper, we consider a family of vector fields, $f_i : \R^n \to \R^n$, with $i$ belonging to a finite index set $\cI:=\{1,\dots,{\rm m}\}$ for some ${\rm m}\in \N\setminus \{0\}$. We stipulate the following property with each of these vector fields:
\begin{assumption}\label{Asssum:WellPosedness}
For each $i \in \cI$, we suppose that $f_i\in \sol(\R^n)$, where the notation $f_i \in \sol(\R^n)$ is used to say that $f_i\in \cLL_0(\R^n,\R^n)$ and $f_i$ induces a \emph{well-posed dynamical system with an equilibrium at the origin},  i.e., $f_i(0)=0$ and for the \emph{ODE}
 \begin{equation}\label{eq:subsystem}
 \dot x=f_i(x),
 \end{equation}
the existence, uniqueness and forward completeness of forward solutions (in the sense of Carath\'edory, see~\cite[Section I.5]{Hale}), holds for any initial condition~$x_0\in \R^n$. 
\end{assumption}
Note that, in general, $f_i\in \sol(\R^n)$ does not imply that $f_i\in \cC(\R^n,\R^n)$. Consider $f:\R\to \R$ defined by $f(x)=-\text{sgn}(x)$ (with the convention $\text{sgn}(0)=0$), or $g:\R\to \R$ defined by $g(x)=-\frac{1}{x}$ for $x \neq 0$ and $g(0)=0$. It can be verified that $f,g\in \sol(\R)$.\\
Given $\cF=\{f_i\}_{i\in \cI}\subset \sol(\R^n)$, for any $i\in \cI$,  we denote the \emph{(semi-)flow of the $i$-th subsystem} by $\Phi_i:\R_+\times \R^n\to \R^n$, i.e.
\[
\Phi_i(t,x):=\text{solution to~\eqref{eq:subsystem}, starting at $x\in \R^n$ evaluated at time $t\in \R_+$.}
\]
Given $\cF=\{f_i\}_{i\in \cI}\subset \sol(\R^n)$, we consider the \emph{switched system} defined by
\begin{equation}\label{eq:SwitchedSystem}
\dot x(t)=f_{\sigma(t)}(x(t)),\;\;\;x(0)=x_0\in \R^n,\;\;t\in \R_+,
\end{equation}
where $\sigma:\R_+\to \cI$ is an external \emph{switched signal}. More precisely, the switching signals $\sigma$ are selected, in general, among the set $\cS$ defined by
\begin{equation}\label{eq:arbitrarySwitching}
\cS:=\left\{\sigma:\R_+\to \cI\;\vert\;\;\sigma \;\text{piecewise constant and right continuous} \right\}.
\end{equation}
Given a signal $\sigma\in \cS$, we denote the sequence of switching instants, that is, the points at which $\sigma$ is discontinuous,  by $\{t^\sigma_k\}$. Moreover, given $t\geq s$, we define by $N_\sigma(s,t)$ as the number of discontinuity points of $\sigma$ in the interval $[s,t)$. We stress that $t^\sigma_0:=0$ is considered to be a discontinuity point. We note that, for any $\sigma\in \cS$, the set $\{t^\sigma_k \}$ may be infinite or finite; if it is infinite, then it is unbounded.\\
Given  $\sigma \in \cS$,  $x\in \R^n$ and $t\in \R_+$ we denote by $\Phi_\sigma(t,x)$ the \emph{solution} of~\eqref{eq:SwitchedSystem} starting at $x$ and evaluated at $t$ with respect to the switching signal $\sigma\in \cS$.
\\
\emph{Linear Case:}
Given $\cA=\{A_1,\dots, A_{\rm m}\}\subset \R^{n\times n}$, we consider the \emph{switched linear system} defined by
\begin{equation}\label{eq:LinearSwitchedSystem}
\dot x(t)=A_{\sigma(t)}x(t),\;\;\;x(0)=x_0,\;\;t\in \R_+,
\end{equation}
where $\sigma\in \cS$ is again an external switched signal. \\
Given a $V\in \cLL_0(\R^n,\R)\cap \cC(\R^n,\R)$ and $f_i\in \sol(\R^n)$, we denote by $D^+_{f_i}V$ the \emph{Dini-derivative of $V$ with respect to $f_i$}, defined as
\[
D^+_{f_i}V(x):=\limsup_{h\to 0^+}\frac{V(\Phi_i(h,x))-V(x)}{h},\;\;\;\forall\;x\in \R^n.
\]
We recall that, given $x\in \R^n$, if $V$ is continuously differentiable at $x$, then
\[
D^+_{f_i}V(x)=\inp{\nabla V(x)}{f_i(x)}.
\]
In what follows, we introduce the subsets of switching signals considered in this manuscript.
\begin{defn}[(Average) Dwell-Time Signals]\label{defn:AverageSignals}
Given a threshold $\tau>0$ we denote by 
\begin{equation}\label{eq:DweelTimeClass}
\cS_{\dw}(\tau):=\left\{\sigma\in \cS\;\vert\;t^\sigma_{k} -t^\sigma_{k-1}\geq \tau,\;\forall \;t^\sigma_k>0  \right\},
\end{equation}
 the class of $\tau$-\emph{dwell-time switching signals}. \\
Given $N_0\in \N$ and $\tau>0$, we consider the class of $(\tau,N_0)$-\emph{average dwell-time switching signals}, defined by
\begin{equation}\label{eq:AverageFixedN0}
\cS_{\ad}(\tau,N_0):=\left\{\sigma\in \cS\;\vert\;N_\sigma(s,t)\leq N_0+\frac{t-s}{\tau},\;\;\forall \;t\geq s\geq 0 \;\right\};
\end{equation}
in this case, $N_0\in \N$ is referred to as the \emph{chattering bound}.\\
We denote by $\cS^{\infty}_{\ad}(\tau)$ the class of the $\tau$-\emph{average dwell-time switching signals}, defined by
\begin{equation}\label{eq:CLassAverage}
\cS^{\infty}_{\ad}(\tau):=\bigcup_{N_0\in \N}\cS_{\ad}(\tau,N_0).
\end{equation}
Finally, we denote by $\bar \cS(\tau)$ the class of \emph{eventually $\tau$-average signals} defined by
\begin{equation}\label{eq:EventualAverage}
\bar \cS(\tau):=\{\sigma \in \cS\;\vert\;\limsup_{t\to +\infty} N_\sigma(0,t)-\frac{t}{\tau}<+\infty\}.
\end{equation}
\end{defn}
\begin{rem}\label{rem:PropertiesSIgnals}
We provide some remarks and insights regarding the introduced families of signals. Let us fix a $\tau>0$. The following propositions hold:
\begin{enumerate}[leftmargin=*]
\item  For any $N_1\leq N_2$,   $\cS_{\ad}(\tau,N_1)\subset\cS_{\ad}(\tau,N_2)$, or in other words, the set-valued function $n\mapsto\cS_{\ad}(\tau,n)$ is increasing with respect to the partial relation given by the set inclusion;
\item $\cS_{\ad}(\tau,1)=\cS_{\dw}(\tau)$;
\vspace{0.1cm}
\item $\cS^\infty_{\ad}(\tau)\subsetneqq \bar \cS(\tau)$;
\vspace{0.1cm}
 \item Consider $\sigma \in \cS$ periodic of period $T>0$, and suppose without loss of generality that $\lim_{t\to T^-}\sigma(t)\neq \sigma(0)$. Then, $\sigma\in \bar \cS(\tau)$ if and only if $N_\sigma(0,T)\leq \frac{T}{\tau}$. Moreover, this is also equivalent to the property  $\sigma\in \cS_{\ad}(\tau, N_\sigma(0,T))$.
\end{enumerate}
Item~(1) is straightforward. 
Let us prove (2): given $\sigma\in \cS_{\dw}(\tau)$, consider any $0\leq s\leq t$ and suppose $K\tau\leq (t-s)< (K+1)\tau$ for some $K\in \N$. Then $N_\sigma(s,t)\leq K+1\leq 1+\frac{t-s}{\tau}$, proving that $\sigma\in \cS_{\ad}(\tau,1)$. Suppose now $\sigma\in \cS_{\ad}(\tau,1)$ and consider any switching point $t^\sigma_{k}>0$. For all $s\in [t^\sigma_{k-1}, t^\sigma_{k-1}+\tau)$ we have $N_\sigma(t^\sigma_{k-1},s)\leq 1+\frac{s-t^\sigma_{k-1}}{\tau}<2$, and thus $s$ cannot be a discontinuity point (since already $t^\sigma_{k-1}\in [t^\sigma_{k-1},s)$ is). This proves that $t^\sigma_{k}-t^\sigma_{k-1}\geq \tau$, for any $t^\sigma_k>0$, concluding the proof.

Let us prove Item~(3). Let us consider any $\sigma\in \cS^\infty_{\ad}(\tau)$ i.e. there exists $N_0\in \N$ such that $\sigma\in \cS_{\ad}(\tau, N_0)$. We obtain 
\[
N_\sigma(0,t)-\frac{t}{\tau}\leq N_0+\frac{t}{\tau}-\frac{t}{\tau}=N_0,\;\;\forall \,t\in \R_+,
\]
and thus trivially, $\limsup_{t\to +\infty}N_\sigma(0,t)-\frac{t}{\tau}\leq N_0<+\infty$. To see that the inclusion is strict, consider a signal $\sigma:\R_+\to \cI$ such that
\[
\begin{aligned}
\sigma \;&\text{has $k-1$ discontinuity points in the interval }[k^2\tau, (k^2+1) \tau),\;\;\forall \;k\in \N\setminus\{0\},\\
\sigma \;&\text{ is constant otherwise}.
\end{aligned}
\]
It is clear that $\sigma\notin \cS^\infty_{\ad}(\tau)$ since it has an unbounded number of discontinuity points in the intervals of length $\tau$ of the form $[k^2\tau, (k^2+1) \tau)$, as $k\to +\infty$. On the other hand, consider any $t\in \R_+$ and suppose $t\in [k^2\tau, (k+1)^{2}\tau)$, for any $k\in \N$.  We then have
\[
N_{\sigma}(0,t)-\frac{t}{\tau}\leq \sum_{h=1}^{h=k}(h-1)-k^2\leq \frac{k(k-1)}{2}-k^2\leq \frac{-k^2-k}{2}\leq 0.
\]
By arbitrariness of $t\in \R_+$, we conclude that $\limsup_{t\to +\infty} N_\sigma(0,t)-\frac{t}{\tau}<+\infty$, i.e., $\sigma\in \bar \cS(\tau)$.

We now prove Item~(4),  let us suppose that $N_\sigma(0,T)\leq \frac{T}{\tau}$. It can be easily proved by induction that, for any $t_k=kT$ with $k\in \N$ we have $N_\sigma(0,t_k)=kN_\sigma(0,T)$. Then consider any $t\in \R_+$ and suppose $t\in [KT,(K+1)T)$ for some $K\in \N$, then,
\[
N_\sigma(0,t)-\frac{t}{\tau}\leq (K+1)N_\sigma(0,T)-\frac{KT}{\tau}\leq(K+1)\frac{T}{\tau}-K\frac{T}{\tau}=\frac{T}{\tau}. 
\]
Since the bound is independent of $K$, this implies that $\limsup_{t\to +\infty}N_\sigma(0,t)-\frac{t}{\tau}<+\infty$ and thus $\sigma\in \bar \cS(\tau)$. For the converse implication, let us suppose $N_\sigma(0,T)>\frac{T}{\tau}$ and thus consider $\varepsilon>0$ such that $N_\sigma(0,T)=\frac{T}{\tau}+\varepsilon$. Let us consider $t_k=kT$ for all $k\in \N$, it follows that
\[
N_\sigma(0,t_k)-\frac{t_k}{\tau}=kN_\sigma(0,T)-\frac{kT}{\tau}=k(\frac{T}{\tau}+\varepsilon)-\frac{kT}{\tau}=k\varepsilon.
\]
We have thus proved that $\lim_{k\to +\infty}N_\sigma(0,t_k)-\frac{t_k}{\tau}=+\infty$, proving that $\sigma \notin \bar \cS(\tau)$.

Finally, it is easy to see that, if $N_\sigma(0,T)\leq \frac{T}{\tau}$ then $N_\sigma(s,t)\leq \frac{t-s}{\tau}+N_\sigma(0,T)$ for any $0\leq s\leq t$, implying $\sigma\in \cS_{\ad}(\tau, N_\sigma(0,T))$. Since we have already proved that $\cS^\infty_{\ad}(\tau)\subset \bar \cS(\tau)$, the desired assertion follows.
\end{rem}

\section{Stability over Classes of Switching Signals: Review and First Results}\label{sec:Stability}
In this section we recall and review classical concepts of stability for switched systems with respect to classes of switching signals, providing some discussion and first results.
We introduce here the concept of \emph{uniform} stability with respect to a class of switching signals for systems as in~\eqref{eq:SwitchedSystem}.
\begin{defn}[Stability Notions for a Given Class]\label{defn:StabilityGivenClass}
Given a class $\wt \cS\subseteq \cS$ and $\cF=\{f_i\}_{i\in \cI}\subset \sol(\R^n)$,  system~\eqref{eq:SwitchedSystem} is said to be \emph{uniformly globally asymptotically stable} (UGAS) with respect to (w.r.t.) $\wt\cS$ if there exists $\beta\in \cKL$ such that
\begin{equation}\label{eq:KLBounds}
|\Phi_\sigma(t,x)|\leq \beta(|x|,t),\;\; \;\;\forall\;\sigma\in \wt \cS,\;\forall x\in \R^n,\;\forall \;t\in \R_+.
\end{equation}
Given $\rho>0$, system~\eqref{eq:SwitchedSystem} is said to be \emph{uniformly globally exponentially stable with decay $\rho$} ($\text{UGES}_\rho$) w.r.t. $\wt \cS$ if there exists $M\geq 0$ such that
\begin{equation}\label{eq:ExponentialStability}
|\Phi_\sigma(t,x)|\leq Me^{-\rho t}|x|,\;\;\forall\;\sigma\in \wt \cS,\;\forall x\in \R^n,\;\forall \;t\in \R_+.
\end{equation}
The supremum over the $\rho>0$ for which~\eqref{eq:ExponentialStability} is satisfied for some $M\geq 0$ is called the $\wt \cS$-\emph{exponential decay rate}, and it is denoted by $\rho_{\wt \cS}(\cF)$. 
\end{defn}
We report from the literature, in a condensed form, the main characterization result for UGAS of~\eqref{eq:SwitchedSystem} (and $\text{UGES}_\rho$ of~\eqref{eq:LinearSwitchedSystem}) with respect to dwell-time switching signals, i.e., $\wt \cS=\cS_{\dw}(\tau)$, for a given $\tau>0$.
\begin{prop}[Lyapunov characterization for $\cS_{\dw}(\tau)$]\label{prop:ConverseDwellTime}
Consider any $\tau>0$ and $\cF=\{f_1,\dots, f_{\rm m}\}\subset \sol(\R^n)$. System~\eqref{eq:SwitchedSystem} is UGAS w.r.t. $\cS_{\dw}(\tau)$ if and only if there exist $V_1,\dots, V_{\rm m}\in \cLL_0(\R^n,\R)$ and $\alpha_1,\alpha_2\in \cK_\infty$ such that
\begin{subequations}
\begin{align}
\alpha_1(|x|)\leq V_i(x)\leq \alpha_2(|x|),\;\;\;\;\;&\forall i\in \cI,\forall x\in \R^n,\label{eq:SandwichDwellTimaNonlinear}\\
D^+_{f_i}V_i(x)\leq - V_i(x),\;\;\;\;\;&\forall i\in \cI,\forall x\in \R^n,\label{eq:ContinuousDecreasinDwellTimaNonlinear}\\
V_j(\Phi_i(\tau,x))\leq e^{-\tau}V_i(x),\;\;\;\;\;&\forall (i,j)\in \cI^2,\;\;\forall x\in \R^n. \label{eq:JumpCondNORMNonLinear}
\end{align}
\end{subequations}
\emph{(Linear Case):} Consider $\cA=\{A_1,\dots, A_{\rm m}\}\subset \R^{n\times n}$; given $\tau>0$ and $\rho>0$, system \eqref{eq:LinearSwitchedSystem} is $\text{UGES}_\rho$ w.r.t. $\cS_\dw(\tau)$ if and only if there exist norms $v_1,\dots, v_{\rm m}:\R^n\to \R$ such that
\begin{subequations}\label{eq:LinearDwellTimeINequalities}
\begin{align}
D^+_{A_i}v_i(x)\leq -\rho v_i(x),\;\;\;\;\;&\forall i\in \cI,\forall x\in \R^n,\label{eq:ContinuousDecreasinNORM}\\
v_j(e^{\tau A_i}x)\leq e^{-\rho \tau}v_i(x),\;\;\;\;\;&\forall (i,j)\in \cI^2,\;\;\forall x\in \R^n. \label{eq:JumpCondNORM}
\end{align}
\end{subequations}
\end{prop}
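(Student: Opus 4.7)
The plan is to prove the two implications separately. The \emph{sufficiency} direction is the standard cascade-type Lyapunov argument chaining \eqref{eq:ContinuousDecreasinDwellTimaNonlinear} between switches and \eqref{eq:JumpCondNORMNonLinear} at the switching instants. The \emph{necessity} direction is the substantive part: I would construct the family $\{V_i\}_{i\in\cI}$ explicitly as a supremum of weighted trajectory magnitudes over all future evolutions within $\cS_{\dw}(\tau)$, in the spirit of ``reachable-energy'' converse Lyapunov constructions for dwell-time signals.

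For sufficiency, fix any $\sigma\in\cS_{\dw}(\tau)$ with switching instants $\{t_k\}$ and active modes $i_k=\sigma(t_k)$. The comparison principle for Dini derivatives applied to \eqref{eq:ContinuousDecreasinDwellTimaNonlinear} yields $V_{i_k}(\Phi_\sigma(t,x))\leq e^{-(t-t_k)}V_{i_k}(\Phi_\sigma(t_k,x))$ for all $t\in[t_k,t_{k+1})$. At the instant $t_{k+1}$, I would use the dwell-time property to write $t_{k+1}-t_k=\tau+s_k$ with $s_k\geq 0$, and invoke the semigroup property to decompose $\Phi_\sigma(t_{k+1},x)=\Phi_{i_k}\bigl(\tau,\Phi_{i_k}(s_k,\Phi_\sigma(t_k,x))\bigr)$. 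Combining \eqref{eq:ContinuousDecreasinDwellTimaNonlinear} (to handle the preliminary $s_k$-flow of $V_{i_k}$) with \eqref{eq:JumpCondNORMNonLinear} (to cross the switch across a final $\tau$-window) gives $V_{i_{k+1}}(\Phi_\sigma(t_{k+1},x))\leq e^{-(t_{k+1}-t_k)}V_{i_k}(\Phi_\sigma(t_k,x))$. Induction produces $V_{\sigma(t)}(\Phi_\sigma(t,x))\leq e^{-t}V_{\sigma(0)}(x)$, uniformly in $\sigma$, and \eqref{eq:SandwichDwellTimaNonlinear} converts this into the $\cKL$ estimate $|\Phi_\sigma(t,x)|\leq\alpha_1^{-1}\bigl(e^{-t}\alpha_2(|x|)\bigr)$.

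For necessity, I start from the UGAS bound~\eqref{eq:KLBounds} and invoke Sontag's $\cKL$-lemma to obtain $\alpha_1,\alpha_2\in\cK_\infty$ with $\alpha_1(\beta(r,t))\leq e^{-t}\alpha_2(r)$. Then I define
\[
V_i(x):=\sup_{\sigma\in\cS_{\dw}(\tau),\,\sigma(0)=i}\;\sup_{t\geq 0}\,e^{t}\,\alpha_1\bigl(|\Phi_\sigma(t,x)|\bigr).
\]
Finiteness and the upper sandwich $V_i(x)\leq\alpha_2(|x|)$ come from Sontag's estimate, while the lower sandwich $V_i(x)\geq\alpha_1(|x|)$ follows by taking $t=0$ and $\sigma\equiv i$. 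For the flow inequality~\eqref{eq:ContinuousDecreasinDwellTimaNonlinear}, I would observe that every $\sigma'\in\cS_{\dw}(\tau)$ with $\sigma'(0)=i$ remains in $\cS_{\dw}(\tau)$ (starting at mode $i$) after being prepended with a constant-mode-$i$ segment of length $h>0$, since the dwell-time is only strengthened by adding a prefix at the same mode; applying this observation to the trajectory through $\Phi_i(h,x)$ gives $V_i(\Phi_i(h,x))\leq e^{-h}V_i(x)$, whence $D^+_{f_i}V_i(x)\leq -V_i(x)$. The jump condition~\eqref{eq:JumpCondNORMNonLinear} uses the same idea with a constant-mode-$i$ prefix of length \emph{exactly} $\tau$ followed by any $\sigma'\in\cS_{\dw}(\tau)$ with $\sigma'(0)=j$, yielding $V_j(\Phi_i(\tau,x))\leq e^{-\tau}V_i(x)$.

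The linear case is obtained by the same template with $\rho$ replacing $1$ and the norm $|\cdot|$ replacing $\alpha_1(|\cdot|)$, namely
\[
v_i(x):=\sup_{\sigma\in\cS_{\dw}(\tau),\,\sigma(0)=i}\;\sup_{t\geq 0}\,e^{\rho t}|\Phi_\sigma(t,x)|.
\]
Here linearity of each $\Phi_\sigma(t,\cdot)$ promotes $v_i$ to a (finite, positive-definite) norm: positive homogeneity and subadditivity are inherited from $|\cdot|$ through the supremum, the UGES bound~\eqref{eq:ExponentialStability} gives finiteness, and the lower bound $v_i(x)\geq|x|$ from $t=0$ rules out degeneracy; the decrease and jump inequalities~\eqref{eq:ContinuousDecreasinNORM}--\eqref{eq:JumpCondNORM} then carry over verbatim from the nonlinear prefix construction. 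The main obstacle I expect in the nonlinear argument is verifying $V_i\in\cLL_0(\R^n,\R)$: although each $\Phi_\sigma(t,\cdot)$ is locally Lipschitz on $\R^n\setminus\{0\}$, the supremum over the non-compact family $\cS_{\dw}(\tau)$ does not automatically preserve this regularity, so an additional uniform local Lipschitz estimate (or a mollification/inf-convolution step of the type used in converse Lyapunov theory) will likely be needed; in the linear case the norm property bypasses this issue altogether.
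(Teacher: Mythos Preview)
Your proposal is correct in both directions and follows the standard converse-Lyapunov template for dwell-time signals. Note, however, that the paper does not actually prove Proposition~\ref{prop:ConverseDwellTime}: it merely cites \cite{Wirth2005,ChiGugPro21} for the linear case and \cite{DellaRos23} for the nonlinear extension. So there is no in-paper proof to compare against directly.

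That said, your construction is precisely the one the authors deploy later for their main result, Theorem~\ref{thm:Converse}: the candidate $V_i$ (respectively $v_i$) defined as a supremum of exponentially-weighted trajectory magnitudes over all admissible signals with $\sigma(0)=i$, combined with concatenation/prefix arguments to obtain the flow and jump inequalities. Your treatment of the prefix argument is exactly Lemma~\ref{lemma:Concatenation} specialized to $\cS_{\dw}(\tau)$, and your linear norm verification mirrors Corollary~\ref{cor:ConverseLinear}. The one point you flag as an obstacle---local Lipschitz continuity of the supremum on $\R^n\setminus\{0\}$---is genuine, and the paper handles the analogous issue for the $\GAS$ construction in Lemma~\ref{lemma:LocLipschitz} by (i) introducing an extra $\varepsilon$-margin in the Sontag decomposition so that the supremum is effectively attained over a compact time interval $[0,T_1(x)]$, and (ii) invoking continuous dependence on initial conditions over that compact window. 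Your sketch would need the same device: as written, $\alpha_1(\beta(r,t))\le e^{-t}\alpha_2(r)$ gives no decay margin and the finite-horizon reduction fails, so you should instead secure $\alpha_1(\beta(r,t))\le e^{-(1+\varepsilon)t}\alpha_2(r)$ from Sontag's lemma before defining $V_i$.
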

The proof of this proposition, for the linear case, is provided in~\cite{Wirth2005}, see also~\cite{ChiGugPro21}. For the proof of the direct extension to the non-linear case, see~\cite{DellaRos23}.

Summarizing, Definition~\ref{defn:StabilityGivenClass} provides notions of stability which are \emph{uniform} over a given class $\wt \cS\subseteq\cS$.  In the case of $\cS_{\dw}(\tau)$, for some $\tau>0$, there already exists a complete Lyapunov characterization of such stability notions, as illustrated in Proposition~\ref{prop:ConverseDwellTime}. 
Similar results can be found for related classes of signals (signals with lower bound on the length of intervals, signals with graph-based constraints, etc.), see for example~\cite{DellaRos23,ChiMas17,ChiGugPro21,ProKam23}.

On the other hand, for the class of switching signals $\cS^\infty_{\ad}(\tau)$ defined in~\eqref{eq:CLassAverage}, Definition~\ref{defn:StabilityGivenClass} is somehow too restrictive in the sense that it requires a single $\cK\cL$ function that works for $\cS_\ad(\tau, N_0)$ for all values of $N_0$. This is formally illustrated in the following lemma.
\begin{lemma}\label{lemma:ImpyArbitrayStability}
 Given $\tau>0$ and $\cF=\{f_i\}_{i\in \cI}\subset \sol(\R^n)$, system~\eqref{eq:SwitchedSystem} is  UGAS w.r.t. $\cS^\infty_{\ad}(\tau)$ if and only if  it is UGAS w.r.t. $\cS$ (a.k.a. arbitrary switching stability). Similarly, given $\rho>0$, system~\eqref{eq:SwitchedSystem} is  $\text{UGES}_\rho$ w.r.t. $\cS^\infty_{\ad}(\tau)$ if and only if  it is $\text{UGES}_\rho$ w.r.t. $\cS$.
\end{lemma}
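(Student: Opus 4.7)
The ``if'' direction is immediate from the inclusion $\cS^{\infty}_{\ad}(\tau)\subseteq \cS$: any $\cKL$ bound valid over $\cS$ is in particular valid over $\cS^{\infty}_{\ad}(\tau)$, and similarly for the exponential case. The interesting direction is the converse, and the plan is to reduce an \emph{arbitrary} $\sigma\in \cS$ to an average dwell-time signal by truncating it after an arbitrary but fixed time horizon.

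The key observation is that every $\sigma\in \cS$ is, by definition in~\eqref{eq:arbitrarySwitching}, piecewise constant and right continuous with an unbounded (or finite) set of switching instants $\{t^\sigma_k\}$; in particular, for each $t\in \R_+$ the number $N_\sigma(0,t)$ is finite. Therefore, given $\sigma\in \cS$, $x\in \R^n$ and $t\in \R_+$, I would define the truncated signal
\[
\tilde\sigma(s):=\begin{cases}\sigma(s),& 0\leq s\leq t,\\ \sigma(t),& s>t,\end{cases}
\]
which has at most $N:=N_\sigma(0,t)+1$ discontinuities in total (counting the initial one at $t_0^\sigma=0$). For any $0\leq s\leq s'$ the count $N_{\tilde\sigma}(s,s')$ is bounded by $N$, hence trivially $N_{\tilde\sigma}(s,s')\leq N+(s'-s)/\tau$, so $\tilde\sigma\in \cS_{\ad}(\tau,N)\subseteq \cS^{\infty}_{\ad}(\tau)$.

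Since $\tilde\sigma$ agrees with $\sigma$ on $[0,t]$, by the uniqueness of Carath\'eodory solutions guaranteed by Assumption~\ref{Asssum:WellPosedness} we have $\Phi_\sigma(t,x)=\Phi_{\tilde\sigma}(t,x)$. Applying the UGAS hypothesis w.r.t.\ $\cS^{\infty}_{\ad}(\tau)$ to $\tilde\sigma$, we obtain
\[
|\Phi_\sigma(t,x)|=|\Phi_{\tilde\sigma}(t,x)|\leq \beta(|x|,t),
\]
with $\beta\in \cKL$ the same function as in~\eqref{eq:KLBounds}. Since $\sigma\in \cS$, $x\in \R^n$, and $t\in \R_+$ were arbitrary, this yields UGAS over $\cS$. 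The exponential case is identical: replacing $\beta(|x|,t)$ by $Me^{-\rho t}|x|$ in the above inequality establishes $\text{UGES}_\rho$ w.r.t.\ $\cS$.

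I do not anticipate any real obstacle here: the argument only relies on the finiteness of $N_\sigma(0,t)$ for each fixed $t$ (which is built into the definition of $\cS$) and on the forward-in-time causality of solutions, so that modifying a signal after time $t$ does not affect the solution evaluated at $t$. The only point worth stating carefully is that the chattering bound $N$ of $\tilde\sigma$ depends on $\sigma$ and $t$, but this is harmless because $\cS^{\infty}_{\ad}(\tau)$ is defined as the \emph{union} over all $N_0\in \N$ of the classes $\cS_{\ad}(\tau,N_0)$, and the $\cKL$ bound assumed in Definition~\ref{defn:StabilityGivenClass} is uniform over this whole union.
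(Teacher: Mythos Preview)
Your proof is correct and follows essentially the same approach as the paper: both argue the nontrivial direction by observing that any $\sigma\in\cS$ restricted to a finite horizon $[0,t]$ can be extended to a signal in $\cS^{\infty}_{\ad}(\tau)$ (with chattering bound depending on $t$ and $\sigma$), and then invoke causality of solutions together with the uniformity of the $\cKL$ bound over $\cS^{\infty}_{\ad}(\tau)$. Your explicit truncation $\tilde\sigma$ is exactly the construction the paper alludes to when it writes ``there exists $\wt\sigma\in\cS^{\infty}_{\ad}(\tau)$ such that $\sigma(t)=\wt\sigma(t)$ for all $t\leq T$.''
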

\begin{proof}
If  system~\eqref{eq:SwitchedSystem} is UGAS w.r.t. $\cS$ then it is UGAS w.r.t. $\cS^\infty_{\ad}(\tau)$, since  $\cS^\infty_{\ad}(\tau)\subset \cS$. Let us then suppose  that system~\eqref{eq:SwitchedSystem} is UGAS w.r.t. $\cS^\infty_{\ad}(\tau)$, i.e., there exists $\beta \in \cKL$ such that
\[
|\Phi_\sigma(t,x)|\leq \beta(|x|,t)\;\; \;\;\forall\;\sigma\in  \cS_{\ad}^\infty(\tau),\;\forall x\in \R^n,\;\forall \;t\in \R_+.
\]
Consider any $\sigma\in \cS$ and any $T>0$, we have that there exists a $N_0\in \N$ such that 
\[
\sigma_{\vert_{[0,T)}}\in \cS_{\ad}(\tau,N_0)_{\vert_T},
\]
where, given $\wt \cS\subseteq \cS$, we denote by $\wt \cS\vert_T$ the set of restrictions of signals in $\wt \cS$ over the interval $[0,T)$, i.e.
\[
\wt \cS_{\vert_T}:=\{\gamma:[0,T)\to \mathcal{I}\;\vert\; \exists \,\sigma\in \wt \cS\text{ s.t. } \gamma=\sigma_{\vert_{[0,T)}}\}.
\] 
As a (conservative) case, we can choose $N_0=N_\sigma(0,T)<+\infty$.
In other words, there exists $\wt \sigma\in \cS^\infty_{\ad}(\tau)$ such that $\sigma(t)=\wt \sigma(t)$ for all $t\leq T$.
We thus have
\[
|\Phi_\sigma(t,x)|=|\Phi_{\wt \sigma}(t,x)|\leq \beta(|x|,t),\;\;\forall \,x\in \R^n,\; \forall\, t\leq T.
\]
By arbitrariness of $\sigma \in \cS$ and $T>0$, we conclude that UGAS w.r.t. $\cS^\infty_{\ad}(\tau)$ implies UGAS w.r.t. $\cS$.
\\
The $\text{UGES}_\rho$ case follows a completely equivalent argument, that it is thus not explicitly reported. 
\end{proof}
Lemma~\ref{lemma:ImpyArbitrayStability} suggests to introduce a non-uniform notion of stability in order to take into account signals in $\cS^\infty_{\ad}(\tau)$ (and therefore also in $\bar \cS(\tau)$). In what follows we introduce a somehow ``strong'' notion of boundedness, that takes into account the number of discontinuity points of any signal, up to the considered time.

\begin{defn}[Jump Dependent Boundedness Notions]\label{defn:strongNotion}
Consider any $\tau>0$ and $\cF=\{f_i\}_{i\in \cI}\subset \sol(\R^n)$, system~\eqref{eq:SwitchedSystem} is~\emph{$\tau$-dependent uniformly globally bounded} (or, $\GAS$ for short) if there exist $\eta_1,\eta_2\in \cK_\infty$ and $\alpha\geq 0$  such that
\begin{equation}\label{eq:NonlinearRelaxation}
|\Phi_\sigma(t,x)|\leq  \eta_1(e^{\alpha \tau N_\sigma(0,t)}e^{-(1+\alpha) t} \eta_2(|x|))\;\;\,\forall \sigma\in \cS,\;\forall x\in \R^n,\;\;\forall \,t\in \R_+.
\end{equation}
 Similarly, given $\rho>0$, we say that  system~\eqref{eq:SwitchedSystem} is~\emph{$\tau$-dependent uniformly globally exponentially bounded with decay $\rho$} ($\ES$ for short) if there exist $M>0$ and $\alpha\geq 0$ such that
\begin{equation}\label{eq:LinearRElaxation}
|\Phi_\sigma(t,x)|\leq M\,e^{\alpha \tau N_\sigma(0,t)}e^{-(\rho+\alpha) t}|x|,\;\;\;\,\forall \sigma\in \cS,\;\forall x\in \R^n,\;\;\forall \,t\in \R_+.
\end{equation}
\end{defn}
The notions of $\GAS$ and $\ES$ introduced in Definition~\ref{defn:strongNotion} are somehow peculiar: the defining inequalities~\eqref{eq:NonlinearRelaxation} and~\eqref{eq:LinearRElaxation} indeed provide bounds on the norm of solutions to~\eqref{eq:SwitchedSystem} \emph{for any signal} $\sigma\in \cS$, and such bounds explicitly depend on the number of switches/discontinuity points. These bounds lead to convergence of solutions only for $\sigma\in \cS$ for which the term $N_\sigma(0,t)$ is not significantly large with respect to $\frac{t}{\tau}$, at least when $t$ approaches $+\infty$. More specifically, such convergence property holds true for eventually $\tau$-average signals, as defined in~\eqref{eq:EventualAverage}. Moreover, such notions imply \emph{uniform} global asymptotic stability with respect to the classes $\cS_{\ad}(\tau,N_0)$, for any fixed $N_0\in \N$. Before formally proving these claims, we will discuss some preliminary concepts
related to the stability notions introduced.

\begin{rem}[Arbitrary switching stability]\label{rem:ArbitraryStability}
We note that in the case $\alpha=0$, inequality~\eqref{eq:NonlinearRelaxation} is equivalent to UGAS w.r.t. $\cS$. This can be proved recalling the Sontag's $\cKL$-lemma (see for example~\cite[Proposition 7]{Son98} or \cite[Lemma 3]{TeelPraly2000}). Similarly, in the case $\alpha=0$,~\eqref{eq:LinearRElaxation} is equivalent to $\text{UGES}_\rho$ w.r.t. $\cS$.
Uniform (exponential) stability on the class of arbitrary switching signals $\cS$ is well-studied, and it already has its Lyapunov characterization, via \emph{common Lyapunov functions}, (see~\cite{Mancilla00} for the non-linear case and~\cite{MolPya89,DayaMar99} for the linear case).
For this reason, the case $\alpha=0$ in~\eqref{eq:NonlinearRelaxation} and~\eqref{eq:LinearRElaxation}  can be considered as a trivial case. 
\end{rem}

Moreover, it can be seen that the choice of the term ``$-1$'' in inequality~\eqref{eq:NonlinearRelaxation} is arbitrary, and can be replaced by $-\lambda$ for any $\lambda>0$ . We formally prove this property in the following statement.
\begin{lemma}\label{lemma:SpeedTransformation}
Given any $\tau>0$, if system~\eqref{eq:SwitchedSystem} is  $\GAS$ then, \emph{for any $\lambda>0$}, there exist $\wt\eta_1,\wt\eta_2\in \cK_\infty$ and $\wt \alpha\geq 0$  such that
\begin{equation}
|\Phi_\sigma(t,x)|\leq  \wt \eta_1(e^{\wt \alpha \tau N_\sigma(0,t)}e^{-(\lambda+\wt \alpha) t} \wt \eta_2(|x|))\;\;\,\forall \sigma\in \cS,\;\forall x\in \R^n,\;\;\forall \,t\in \R_+.
\end{equation}
\end{lemma}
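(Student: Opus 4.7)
The plan is to establish the lemma by a direct algebraic substitution, exploiting a scaling symmetry of the exponent appearing in inequality~\eqref{eq:NonlinearRelaxation}. The key observation is that, upon replacing $\alpha$ by $\lambda\alpha$, the exponent transforms as
\[
\lambda\alpha\,\tau N_\sigma(0,t) - (\lambda + \lambda\alpha)\,t \;=\; \lambda\bigl[\alpha\,\tau N_\sigma(0,t) - (1+\alpha)\,t\bigr],
\]
so that the argument of the outer $\cK_\infty$ function in the target bound becomes precisely the $\lambda$-th power of the corresponding argument appearing in the $\GAS$ hypothesis. This rescaling can then be absorbed by redefining $\eta_1$ and $\eta_2$ through composition with suitable power functions.

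Concretely, I would set $\wt\alpha := \lambda\alpha$, $\wt\eta_2(r) := (\eta_2(r))^\lambda$, and $\wt\eta_1(z) := \eta_1(z^{1/\lambda})$. Since $r\mapsto r^\lambda$ and $z\mapsto z^{1/\lambda}$ are continuous, strictly increasing, unbounded, and vanish at the origin for every $\lambda>0$, the class $\cK_\infty$ is closed under composition with both, whence $\wt\eta_1,\wt\eta_2\in\cK_\infty$; moreover $\wt\alpha\geq 0$ is immediate from $\lambda>0$ and $\alpha\geq 0$. The verification then reduces to a short computation: using the identity above one has
\[
\wt\eta_1\!\left(e^{\wt\alpha\,\tau N_\sigma(0,t)}e^{-(\lambda+\wt\alpha)t}\wt\eta_2(|x|)\right) \;=\; \eta_1\!\left(\bigl(e^{\alpha\,\tau N_\sigma(0,t)-(1+\alpha)t}\,\eta_2(|x|)\bigr)^{\lambda\cdot(1/\lambda)}\right),
\]
which coincides with $\eta_1\bigl(e^{\alpha\tau N_\sigma(0,t)}e^{-(1+\alpha)t}\eta_2(|x|)\bigr)$ and therefore upper bounds $|\Phi_\sigma(t,x)|$ by the $\GAS$ hypothesis.

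There is no substantive obstacle here: the statement is essentially a re-parametrization result, and its entire content lies in spotting the scaling symmetry of the exponent together with the elementary closure of $\cK_\infty$ under composition with positive power functions. The conceptual payoff is precisely what is advertised in the discussion preceding the lemma, namely that the specific value $1$ appearing in the definition of $\GAS$ is a mere normalization and can be replaced by any positive $\lambda$, at the cost of adjusting the accompanying $\cK_\infty$ functions and the parameter $\wt\alpha$.
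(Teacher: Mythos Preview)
Your proof is correct and follows essentially the same approach as the paper: both arguments rest on the power-rescaling $\wt\alpha=\lambda\alpha$, $\wt\eta_1(r)=\eta_1(r^{1/\lambda})$, $\wt\eta_2(r)=(\eta_2(r))^{\lambda}$, which the paper presents in slightly disguised form via the implicit relation $\frac{\wt\alpha}{\lambda+\wt\alpha}=\frac{\alpha}{1+\alpha}$ and the auxiliary parameter $\theta=\wt\alpha/\alpha$ (which in fact equals $\lambda$). Your version is marginally cleaner in that it handles the case $\alpha=0$ uniformly rather than treating it separately.
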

\begin{proof}
Let us suppose that \eqref{eq:SwitchedSystem} is  $\GAS$, and consider $\eta_1,\eta_2\in \cK_\infty$ and $\alpha\geq 0$ such that inequality~\eqref{eq:NonlinearRelaxation} holds. Consider an arbitrary $\lambda>0$. In the case $\alpha=0$, the statement trivially follows by choosing~$\wt \alpha=0$, $\widetilde \eta_1(r)=\eta_1(r^{\frac{1}{\lambda}})$  and $\widetilde \eta_2(r)=(\eta_2(r))^{\lambda}$ for any $r\in \R_+$. If $\alpha>0$, we choose $\wt \alpha>0$ such that the following equation is satisfied:
\[
\frac{\wt \alpha}{\lambda+ \wt \alpha}=\frac{\alpha}{1+\alpha},
\]
and let us call $\theta:=\frac{\lambda+ \wt \alpha}{1+\alpha}=\frac{\wt \alpha}{\alpha}>0$. Then, we have
\[
\begin{aligned}
|\Phi_\sigma(t,x)| &\leq \eta_1(e^{\alpha \tau N_\sigma(0,t)}e^{-(1+\alpha) t} \eta_2(|x|))= \eta_1\left(\left( (e^{\alpha \tau N_\sigma(0,t)}e^{-(1+\alpha) t} \eta_2(|x|)\,)^\theta\right)^{\frac{1}{\theta}} \right)
\\&= \eta_1\left(\left( e^{\wt \alpha \tau N_\sigma(0,t)}e^{-(\lambda+ \wt\alpha) t} (\eta_2(|x|))^\theta\right)^{\frac{1}{\theta}} \right),
\end{aligned}
\]
and we can thus conclude by letting $\wt \eta_1(r) :=\eta_1(r^{\frac{1}{\theta}})$ and $\wt \eta_2(r):=(\eta_2(r))^\theta$.
\end{proof}

The notion introduced in Definition~\ref{defn:strongNotion} has important consequences on stability/boundedness of solutions to~\eqref{eq:SwitchedSystem}, when considering switching signals in the classes $\cS_{\ad}(\tau,N_0)$ and $\bar \cS(\tau)$, as we report in the following statement.
\begin{lemma}[Stability properties]\label{Lemma:StrongConsequences}
Given $\tau>0$ and $\rho>0$ suppose that system~\eqref{eq:SwitchedSystem} is $\GAS$ (resp. $\ES$). Then the following propositions hold:
\begin{enumerate}[leftmargin=*]
\item System~\eqref{eq:SwitchedSystem} is UGAS (resp. $\text{UGES}_\rho$) w.r.t. $\cS_{\ad}(\tau,N_0)$, \emph{for all $N_0\in \N$}.
\item For all $\sigma\in \bar \cS(\tau)$, there exists $\beta_\sigma\in \cKL$ (resp. $M_\sigma\geq 0$) such that
\[
\begin{aligned}
&|\Phi_\sigma(t,x)|\leq \beta_\sigma(|x|,t),\;\;\;\;\forall \;x\in \R^n,\forall t\in \R_+,\\
\text{(resp.)}\;\;&|\Phi_\sigma(t,x)|\leq M_\sigma e^{-\rho t}|x|,\;\;\forall \;x\in \R^n,\forall t\in \R_+.
\end{aligned}
\]
\end{enumerate}
\end{lemma}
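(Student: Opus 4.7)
The plan is to derive both items directly from the defining inequality~\eqref{eq:NonlinearRelaxation} (resp.~\eqref{eq:LinearRElaxation}) by inserting the available upper bound on the switch-count $N_\sigma(0,t)$. In each case the $\sigma$-dependent factor $e^{\alpha\tau N_\sigma(0,t)}$ is the only piece that separates the bound in Definition~\ref{defn:strongNotion} from a genuine $\cKL$-type bound, and a linear-in-$t$ upper bound on $N_\sigma(0,t)$ will cancel against the $e^{-(1+\alpha)t}$ (resp.\ $e^{-(\rho+\alpha)t}$) factor, leaving exponential decay in $t$ with a $\sigma$-free (resp.\ $\sigma$-dependent but still summable-to-zero) prefactor.

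For item~(1), the definition of $\cS_{\ad}(\tau,N_0)$ gives $N_\sigma(0,t)\le N_0+t/\tau$ pointwise. Plugging this into~\eqref{eq:NonlinearRelaxation} I would get
\[
|\Phi_\sigma(t,x)|\le \eta_1\bigl(e^{\alpha\tau N_0}\,e^{-t}\,\eta_2(|x|)\bigr)=:\beta_{N_0}(|x|,t),
\]
where $\beta_{N_0}\in\cKL$ is independent of $\sigma\in \cS_{\ad}(\tau,N_0)$; this is exactly the UGAS estimate~\eqref{eq:KLBounds}. The exponential case is identical with $M_{N_0}:=M\,e^{\alpha\tau N_0}$.

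For item~(2) the key preliminary step is to upgrade the $\limsup$ definition of $\bar\cS(\tau)$ to a \emph{pointwise} bound
\[
N_\sigma(0,t)\le C_\sigma+\frac{t}{\tau}\qquad\forall\,t\ge 0,
\]
for some $\sigma$-dependent $C_\sigma<+\infty$. By hypothesis there exist $T_\sigma>0$ and $K_\sigma\ge 0$ with $N_\sigma(0,t)-t/\tau\le K_\sigma$ for all $t\ge T_\sigma$; for $t\in [0,T_\sigma]$, monotonicity of $N_\sigma(0,\cdot)$ gives $N_\sigma(0,t)-t/\tau\le N_\sigma(0,T_\sigma)$, so that $C_\sigma:=\max\{K_\sigma,\,N_\sigma(0,T_\sigma)\}$ does the job. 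Inserting this bound into~\eqref{eq:NonlinearRelaxation} and~\eqref{eq:LinearRElaxation} exactly as in item~(1) yields $\beta_\sigma(r,t):=\eta_1(e^{\alpha\tau C_\sigma}e^{-t}\eta_2(r))\in\cKL$ and $M_\sigma:=M\,e^{\alpha\tau C_\sigma}$, finishing the proof.

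I do not anticipate a substantive obstacle: the only step that is not a single line of algebra is turning the asymptotic $\limsup$ bound into a bound uniform on $[0,+\infty)$, and this is handled by the monotonicity argument above. Note also that by Lemma~\ref{lemma:SpeedTransformation} the specific constant ``$1$'' appearing in $e^{-(1+\alpha)t}$ is inessential, so one has full flexibility in the resulting decay rate.
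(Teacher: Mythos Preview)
Your proposal is correct and follows essentially the same approach as the paper: for item~(1) both substitute $N_\sigma(0,t)\le N_0+t/\tau$ into~\eqref{eq:NonlinearRelaxation} to obtain $\beta_{N_0}(r,t)=\eta_1(e^{\alpha\tau N_0}e^{-t}\eta_2(r))$, and for item~(2) both split $[0,+\infty)$ into a compact initial interval (handled via $N_\sigma(0,T_\sigma)<\infty$) and the tail (handled via the $\limsup$ hypothesis), then take the larger of the two resulting constants. Your consolidation into a single pointwise bound $N_\sigma(0,t)\le C_\sigma+t/\tau$ before substituting is a slightly cleaner packaging than the paper's two separate estimates~\eqref{eq:TechIneq1}--\eqref{eq:TechIneq2}, but the content is identical.
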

\begin{proof}
Let us prove Item~\emph{(1)} first. Suppose that  system~\eqref{eq:SwitchedSystem} is $\GAS$ and thus consider $\eta_1,\eta_2\in \cK_\infty$ and $\alpha\geq 0$ as in Definition~\ref{defn:strongNotion}. Consider any $N_0\in \N$ and any $\sigma \in \cS_{\ad}(\tau, N_0)$. For any $x\in \R^n$ and any $t\in \R_+$ we have
\[
|\Phi_\sigma(t,x)|\leq \eta_1(e^{\alpha \tau N_\sigma(0,t)}e^{-(1+\alpha) t} \eta_2(|x|))\leq  \eta_1(e^{\alpha \tau (N_0+\frac{t}{\tau})}e^{-(1+\alpha) t} \eta_2(|x|))=\eta_1(e^{\alpha  N_0\tau}e^{-t} \eta_2(|x|)).
\]
Defining $\beta_{N_0}(r,t):=\eta_1(e^{\alpha  N_0\tau}e^{-t} \eta_2(r))$, for all $N_0\in \N$ we have
\[
|\Phi_\sigma(t,x)|\leq \beta_{N_0}(|x|,t),\;\;\;\forall\;\sigma\in  \cS_{\ad}(\tau,N_0),\;\forall x\in \R^n,\;\forall \;t\in \R_+,
\]
concluding the proof. The exponential case, i.e. that $\ES$ implies $\text{UGES}_\rho$ on $\cS_{\ad}(\tau,N_0)$, for all $N_0\in \N$ is similar and thus avoided.\\
Next, let us prove Item~\emph{(2)}. Consider any $\sigma \in \bar \cS(\tau)$, i.e. we suppose that $\limsup_{t\to +\infty} N_{\sigma}(0,t)-\frac{t}{\tau}=C<+\infty$. By definition of the limit superior, given any $\varepsilon>0$, there exists $T_\varepsilon>0$ such that 
\begin{equation}
N_\sigma(0,t)\leq C+\varepsilon+\frac{t}{\tau},\;\;\;\forall \;t\geq T_\varepsilon.
\end{equation}
Let us denote by $N_1=N_\sigma(0,T_\varepsilon)$, then by definition of $\GAS$, we have
\begin{equation}\label{eq:TechIneq1}
|\Phi_\sigma(t,x)|\leq \eta_1(e^{\alpha \tau N_\sigma(0,t)}e^{-(1+\alpha) t} \eta_2(|x|))\leq  \eta_1(e^{\alpha  N_1\tau}e^{-(1+\alpha)t} \eta_2(|x|)),\;\;\forall \;x\in \R^n,\;\;\forall t\leq T_\varepsilon.
\end{equation}
Moreover,
\begin{equation}\label{eq:TechIneq2}
\begin{aligned}
|\Phi_\sigma(t,x)|&\leq \eta_1(e^{\alpha \tau N_\sigma(0,t)}e^{-(1+\alpha) t} \eta_2(|x|))\leq  \eta_1(e^{\alpha \tau(C+\varepsilon+\frac{t}{\tau})}e^{-(1+\alpha)t} \eta_2(|x|))\\&\leq \eta_1(e^{\alpha \tau(C+\varepsilon)}e^{-t} \eta_2(|x|))
,\;\;\forall \;x\in \R^n,\;\;\forall t\geq  T_\varepsilon.
\end{aligned}
\end{equation}
Now consider $R=\max\{N_1,\,C+\varepsilon\}$ and define $\beta_\sigma(r,t):=\eta_1(e^{\alpha \tau R}e^{-t} \eta_2(r))$. Merging~\eqref{eq:TechIneq1} and~\eqref{eq:TechIneq2} we obtain
\[
|\Phi_\sigma(t,x)|\leq \beta_\sigma(|x|,t)\;\;\forall \;x\in \R^n,\;\;\forall \;t\in \R_+,
\]
concluding the proof. Again, the exponential case can be proved by similar arguments.
\end{proof}
 In Lemma~\ref{Lemma:StrongConsequences} we have shown that $\GAS$ and $\ES$ imply the classical UGAS and $\text{UGES}_\rho$  properties w.r.t. $\cS_{\ad}(\tau,N_0)$, \emph{for any fixed} $N_0\in \N$. Moreover, we have seen that they also imply asymptotic (exponential) stability for system~\eqref{eq:SwitchedSystem} when a $\sigma \in \bar \cS(\tau)$ is fixed a priori.
In the next section, we will show how these notions have a direct and neat Lyapunov characterization in terms of multiple Lyapunov functions, somehow mimicking the result provided in Proposition~\ref{prop:ConverseDwellTime} for the UGAS and $\text{UGES}_\rho$  w.r.t. $\cS_{\dw}(\tau)$.

Before providing the aforementioned converse result, in the next subsection we discuss the relations between Definition~\ref{defn:StabilityGivenClass} for $\cS_{\dw}(\tau)$ and Definition~\ref{defn:strongNotion}.

\subsection{$\text{UGES}_\rho$ w.r.t. $\cS_{\dw}(\tau)$ does not imply $\text{UGES}_\rho$ w.r.t. $\cS_{\ad}(\tau,2)$}\label{subsec:Counter}
 In switched systems literature, stability with respect to dwell-time signals $\cS_{\dw}(\tau)$ and stability with respect to \emph{average} dwell-time signals $\cS_{\ad}(\tau,N_0)$ for any $N_0\in \N$ are often interchanged and considered to be qualitatively the same. From a general point of view, this is justified in a ``stabilization setting'': switching among a finite set of \emph{exponentially stable} subsystems will preserve stability, if the switching is slow enough (absolutely or in average). This was the philosophy behind the earlier references~\cite{Morse,HesMor99,Lib03} and related results.
 On the other hand, the relations between these notions of stability  have not been completely analyzed from a theoretical point of view, as far as we know. We provide a first step into this analysis in this short subsection.
\begin{figure}[b!]
\centering
\includegraphics[width=.68\linewidth, height =.46\linewidth]{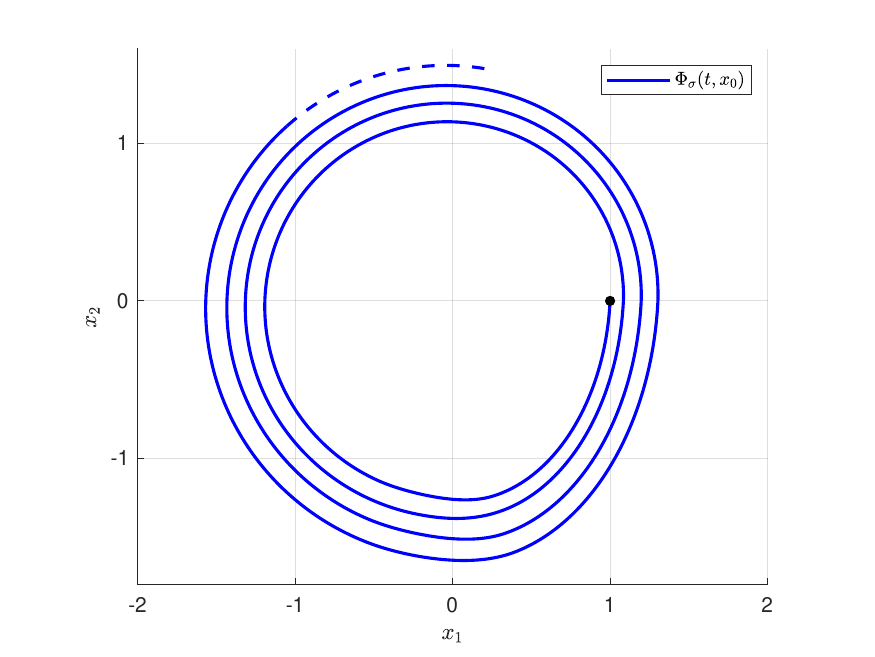}
\caption{Considering the switching signal $\sigma\in \cS_{\ad}(\tau,2)$ in~\eqref{eq:SignalDestabilizing}, and the initial condition $x_0=[1,0]^\top$, we represent the corresponding solution $\Phi_\sigma(\cdot,x_0):\R_+\to \R^2$, highlighting the fact that it is unbounded, i.e.,   $\lim_{t\to +\infty}|\Phi_\sigma(t,x_0)|=+\infty$.}
    \label{fig:Example}
\end{figure}

We start by recalling from Remark~\ref{rem:PropertiesSIgnals} that, given any $\tau>0$,  we have $\cS_{\dw}(\tau)=\cS_{\ad}(\tau,1)\subset \cS_{\ad}(\tau,N_0)$ for any $N_0\geq 1$.
It is thus clear, from Definition~\ref{defn:StabilityGivenClass}, that UGAS (resp. $\text{UGES}_\rho$) w.r.t. $\cS_{\ad}(\tau,N_0)$ for any $N_0\in \N$ implies UGAS (resp. $\text{UGES}_\rho$) w.r.t. $\cS_{\dw}(\tau)$.

Via an explicit numerical example, we now prove that the converse implication does not hold.
More precisely, we provide a switched linear system that is $\text{UGES}_\rho$ w.r.t. $\cS_{\dw}(\tau)$ (for some $\rho>0$ and some $\tau>0$) but unstable on $\cS_{\ad}(\tau,2)$. The system data is obtained by modifying a benchmark example provided in~\cite{Lib03}.
\begin{example} \label{ex:ExampleInstability}
Consider $\cA:=\{A_1,A_2\}\subset \R^{2\times 2}$ defined by
\[
A_1=\begin{bmatrix}
-0.1 & 1\\ -2 &-0.1
\end{bmatrix},\;\;\;A_2=\begin{bmatrix}
-0.03 & 1\\ -1 &-0.03
\end{bmatrix}.
\]
Using the sufficient conditions provided by Proposition~\ref{prop:ConverseDwellTime}, we are able to prove that~\eqref{eq:LinearSwitchedSystem} is $\text{UGES}_\rho$ w.r.t. $\cS_{\dw}(\tau)$, with $\rho=0.001$ and $\tau=\tau_\star:=2.1$. More precisely, it can be seen that there exist $P_1,P_2\in \text{Sym}(\R^{2\times 2})$ such that
\[
\begin{aligned}
I_2\preceq P_i&\preceq 100 I_2,\;\;\;\forall\;i\in \{1,2\},\\
P_iA_i+A_i^\top P_i+2\rho P_i&\prec 0,\;\;\;\forall\;i\in  \{1,2\},\\
e^{A_j^\top\tau_\star} P_j e^{A_j\tau_\star}- e^{-2\rho\tau_\star}P_i&\prec 0,\;\;\;\forall\;(i,j)\in  \{1,2\}^2,
\end{aligned}
\]
where $I_2\in \R^{2\times 2}$ denotes the identity matrix.
This implies that conditions~\eqref{eq:LinearDwellTimeINequalities} in Proposition~\ref{prop:ConverseDwellTime} are satisfied  considering the \emph{quadratic norms} $v_i:\R^2\to \R$ defined by $v_i(x)=\sqrt{x^\top P_i x}$ for $i\in \{1,2\}$.

 On the other hand, we now see that the system is unstable on $\cS_{\ad}(\tau_\star, 2)$, by providing a destabilizing signal $\sigma\in \cS_{\ad}(\tau_\star, 2)$. 
It can be seen that, for any $t\in \R_+$, it holds that
\[
e^{A_1t}=e^{-0.1t}\begin{bmatrix}
\cos(\sqrt{2}t) & \frac{\sqrt{2}}{2}\sin(\sqrt{2}t)\\ -\sqrt{2}\sin(\sqrt{2}t) & \cos(\sqrt{2}t)
\end{bmatrix},\;\;\;e^{A_2t}=e^{-0.03t}\begin{bmatrix}
\cos(t) & \sin(t)\\ -\sin(t) & \cos(t)
\end{bmatrix}.
\]
This suggests to consider $t_1,t_2\in \R_+$ defined by
\[
t_1:=\frac{\pi}{2\sqrt{2}}, \;\;\text{ and }\;\; t_2:=\frac{3\pi}{2},
\]
and to consider the periodic signal $\sigma:\R_+\to \{1,2\}$ of period $T:=t_1+t_2$ defined on the interval $[0,\,T)$  by
\begin{equation}\label{eq:SignalDestabilizing}
\sigma(t)=\begin{cases}
1,\;\;\;\text{if }t\in [0,t_1),\\
2,\;\;\;\text{if }t\in [t_1, T).
\end{cases}
\end{equation}
Intuitively, the idea behind the construction of this switching signal is the following: $t_1$ represents the time in which the solutions of $\dot x=A_1x$ starting on the axes span a $\frac{\pi}{2}$ turn in the state space (clockwise) while  $t_2$  is the time that the solutions of $\dot x=A_2x$  employ to span a $\frac{3\pi}{2}$ turn (clockwise), see Figure~\ref{fig:Example}.
The state-transition matrix of the system $\dot x(t)=A_{\sigma(t)}x(t)$ at time $T$ (see~\cite[Section 4.6]{khalil2002nonlinear}) is then given by
\[
e^{A_2t_2}e^{A_1t_1}=e^{-0.1t_1-0.03t_2}\begin{pmatrix}
\sqrt{2} & 0 \\ 0 & \frac{\sqrt{2}}{2}
\end{pmatrix}.
\]
Choosing as initial condition $x_0=[1,\,0]^\top$, it can be seen that the corresponding solution is exploding, indeed
\[
\lim_{k\to +\infty}|\Phi_{ \sigma}(kT,x_0)|=\lim_{k\to \infty}e^{-k(0.1t_1+0.03t_2)}(\sqrt{2})^k |x_0|>\lim_{k\to +\infty}(1.09)^k=+\infty.
\]
Since $N_\sigma(0,T)=2<2.773\approx \frac{T}{\tau_\star}$, from Item~(4)  of Remark~\ref{rem:PropertiesSIgnals} we have that $\sigma\in \cS_{\ad}(\tau_\star, 2)$. 

By providing an explicit diverging solution, we have thus proved that the system is unstable w.r.t.~switching signals in $\cS_{\ad}(\tau_\star,2)$. 
\end{example}
Summarizing, we have shown that uniform exponential stability w.r.t.~$\cS_{\dw}(\tau)$ does not always imply uniform exponential stability w.r.t. $\cS_{\ad}(\tau, N_0)$, even for planar linear subsystems and for $N_0=2$, and thus in particular it cannot imply $\ES$ as introduced in Definition~\ref{defn:strongNotion}. 

On the other hand, it will be shown in Section~\ref{sec:Linear} that for the family of linear systems $\{A_1,A_2\}$ considered in this example, we can find $\tau^{\star}>\tau_\star$ such that the system is $\text{UGES}_\rho$ with respect to $\cS_{adw}(\tau^{\star},N_0)$ for any $N_0 \in \N$. But the system is clearly not UGAS w.r.t.~arbitrary switching, or w.r.t.~$\cS_{adw}^\infty(\tau)$ for any $\tau \in \R$ (recall Lemma~\ref{lemma:ImpyArbitrayStability}).

\section{Lyapunov (Converse) Result For $\GAS$}\label{sec:MainResult}
In this section we provide a self-contained review of classical multiple Lyapunov conditions for average dwell-time stability, introduced in~\cite{HesMor99} (see also \cite{Morse,Lib03}) and further developed/extended in~\cite{LiuTanwLib}.
After some preliminaries discussion, we provide a converse Lyapunov result inspired by the ideas behind the proof of Proposition~\ref{prop:ConverseDwellTime},  formally proving the equivalence of such conditions with the property introduced in Definition~\ref{defn:strongNotion}.

First of all we recall the main Lyapunov sufficient conditions in the  non-linear case.

\begin{prop}\label{prop:AneelConditions}{\emph{(Theorem 1 in~\cite{LiuTanwLib})}}
Consider $\cF=\{f_1,\dots,f_{\rm m}\}\subset \sol(\R^n)$.
Suppose that there exist $\alpha_1,\alpha_2\in \cK_\infty$, $\rho\in \cK$, $\chi\in \cK_\infty$, and $V_1,\dots, V_{\rm m}\in  \cLL_0(\R^n,\R)$ such that
\begin{subequations}\label{eq:AneelCOnditions}
\begin{equation}\label{eq:Sandwich-NonLinear}
\alpha_1(|x|)\leq V_i(x)\leq \alpha_2(|x|),\;\;\;\forall \,i\in \cI,\;\forall x\in \R^n,
\end{equation}
\begin{equation}\label{eq:Decreae-NonLinear}
D^+_{f_i} V_i(x) \leq -\rho(V_i(x)),\;\;\,\;\;\;\;\;\forall \,i\in \cI,\;\forall x\in \R^n,
\end{equation}
\begin{equation}\label{eq:JumpNonlinear}
V_i(x)\leq \chi(V_j(x)),\;\;\;\;\;\;\;\forall\, (i,j)\in \cI^2,\;\forall\,x\in \R^n.
\end{equation}
\end{subequations}
Moreover, consider $\varepsilon\geq 0$ and the function
\[
\Psi_\varepsilon(t):=\min_{s\in [0,t]}\{\rho(s)+\varepsilon(t-s)\}\leq \min\{\rho(t),\varepsilon t\},\;\;\forall \,t\in \R_+.
\]
Suppose that, for a given $\varepsilon>0$, it holds that  
\begin{equation}\label{eq:SmallGainNon_Linear}
 \tau^\star :=\sup_{s>0}\int_{s}^{\chi(s)}\frac{1}{\Psi_\varepsilon(r)}\,dr<+\infty.
\end{equation}
Then, for any $\tau>\tau^\star$, system~\eqref{eq:SwitchedSystem} is $\GAS$.
\end{prop}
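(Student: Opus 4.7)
My plan is to prove the statement by a comparison-principle argument on the composite Lyapunov functional $W(t):=V_{\sigma(t)}(\Phi_\sigma(t,x_0))$ evaluated along an arbitrary solution of~\eqref{eq:SwitchedSystem}. On each open interval between consecutive switching instants $(t_k^\sigma,t_{k+1}^\sigma)$ the active mode is constant, so the continuous-decrease condition~\eqref{eq:Decreae-NonLinear} gives the Dini inequality $D^+W(t)\leq -\rho(W(t))$; comparison with the scalar ODE $\dot y=-\rho(y)$ yields $\int_{W(t_{k+1}^{\sigma-})}^{W(t_k^{\sigma+})} dr/\rho(r)\leq t_{k+1}^\sigma-t_k^\sigma$. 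At each switching instant, the coupling inequality~\eqref{eq:JumpNonlinear} produces the jump inflation $W(t_k^{\sigma+})\leq \chi(W(t_k^{\sigma-}))$. The sandwich bound~\eqref{eq:Sandwich-NonLinear} will be used only at the very end to convert any estimate on $W$ into a bound on $|\Phi_\sigma(t,x_0)|$ through the $\cK_\infty$ functions $\alpha_1,\alpha_2$.

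Next, I would invoke the small-gain condition~\eqref{eq:SmallGainNon_Linear} to turn the alternating decrease/inflation pattern into a net per-period contraction. Since $\Psi_\varepsilon\leq \rho$ and $\Psi_\varepsilon(r)\leq \varepsilon r$, the integral $\int_s^{\chi(s)} dr/\Psi_\varepsilon(r)$ encodes the worst-case dwell-time needed to reabsorb a jump of size $s\mapsto\chi(s)$; the hypothesis $\tau^\star<+\infty$ is precisely a uniform-in-$s$ bound on this time. Fixing the margin $\delta:=\tau-\tau^\star>0$, over any dwell interval of length at least $\tau$ the continuous flow not only eats the forthcoming jump but also provides a surplus $\delta$ worth of decay. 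Exploiting the affine floor $\varepsilon r$ embedded in $\Psi_\varepsilon$, this surplus can be cashed in as a genuine exponential factor of the form $e^{-\varepsilon\delta}$ per such "slow" switch.

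Finally, for arbitrary $\sigma\in\cS$ and $t\in\R_+$, I would split the switches counted by $N_\sigma(0,t)$ into "slow" ones, where the preceding dwell exceeds $\tau$, and "fast" ones, which violate the nominal spacing. Each slow switch gives the contraction above, whose accumulated effect together with the continuous decrease on the terminal arc produces an $e^{-(1+\alpha)t}$ decay, after a final tuning step via Lemma~\ref{lemma:SpeedTransformation}. The fast switches, whose number is controlled by $N_\sigma(0,t)$, contribute only a multiplicative jump gain bounded by $\eta_2(|x_0|)\,e^{\alpha\tau N_\sigma(0,t)}$ for some $\eta_2\in\cK_\infty$ and some $\alpha>0$. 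Merging the two estimates and pushing the resulting bound on $W$ through~\eqref{eq:Sandwich-NonLinear} delivers exactly the inequality~\eqref{eq:NonlinearRelaxation} of Definition~\ref{defn:strongNotion}, establishing $\GAS$.

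The main obstacle I anticipate is the bookkeeping through the \emph{nonlinear} gain $\chi$: a naive iterate $\chi^{\circ k}$ does not produce a clean exponential $e^{\alpha\tau k}$ and therefore does not match the form demanded in Definition~\ref{defn:strongNotion}. To circumvent this, I would introduce a change of variable $\widetilde W:=\psi(W)$ with $\psi\in\cK_\infty$ constructed from $\chi$ (essentially a logarithmic-type reparametrisation along the flow of the auxiliary ODE $\dot y=-\Psi_\varepsilon(y)$) that linearises the jump condition into $\widetilde W(t_k^{\sigma+})\leq \mu\,\widetilde W(t_k^{\sigma-})$ for some $\mu\geq 1$; it is this linearised reformulation, combined with the per-period contraction above, that produces the exponential dependence on $N_\sigma(0,t)$ and closes the proof.
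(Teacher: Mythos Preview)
Your proposal lands on the same key idea as the paper: the obstacle is the nonlinear gain $\chi$, and the resolution is a $\cK_\infty$ change of variable that linearises both the decrease rate and the jump. In the paper this is isolated as Lemma~\ref{Lemma:LinearizingTheFunctions}: one defines $\gamma(s)=\exp\bigl(\int_1^s\frac{1+\alpha}{\Psi_\varepsilon(r)}\,dr\bigr)$ and sets $W_i:=\gamma\circ V_i$, which turns~\eqref{eq:Decreae-NonLinear} into $D^+_{f_i}W_i\leq -(1+\alpha)W_i$ and~\eqref{eq:JumpNonlinear} into $W_i\leq e^{\alpha\tau}W_j$. After that, the proof of Proposition~\ref{prop:AneelConditions} is a three-line recursion: along any $\sigma\in\cS$ one gets $W_{\sigma(t)}(\Phi_\sigma(t,x))\leq e^{\alpha\tau N_\sigma(0,t)}e^{-(1+\alpha)t}W_{\sigma(0)}(x)$, and the sandwich bounds finish.

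The main difference with your outline is the ordering and the ``slow/fast'' split. You propose to analyse the original $V_i$ along trajectories, separate switches into slow (preceding dwell $\geq\tau$) and fast, and only at the end introduce the linearising $\psi$. The paper instead linearises \emph{first}, at the level of the Lyapunov functions themselves, and then treats \emph{all} switches uniformly: each switch contributes exactly one factor $e^{\alpha\tau}$, each unit of time one factor $e^{-(1+\alpha)}$, and the bound~\eqref{eq:NonlinearRelaxation} drops out with no case distinction. Your slow/fast bookkeeping is therefore unnecessary, and as written it does not obviously produce the precise form $e^{\alpha\tau N_\sigma(0,t)}e^{-(1+\alpha)t}$ demanded by Definition~\ref{defn:strongNotion} (the ``slow'' switches still incur a jump, and the ``fast'' count is not $N_\sigma(0,t)$). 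Once you commit to the change of variable in your last paragraph, that entire layer can be discarded. A minor point: your comparison inequality $\int_{W(t_{k+1}^{\sigma-})}^{W(t_k^{\sigma+})}dr/\rho(r)\leq t_{k+1}^\sigma-t_k^\sigma$ has the wrong sign; from $D^+W\leq -\rho(W)$ one obtains $\geq$.
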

We refer to~\cite[Theorem 1]{LiuTanwLib} for a direct proof of Proposition~\ref{prop:AneelConditions}. It must be noted that the notion of $\GAS$ is not explicitly introduced in~\cite{LiuTanwLib}. However, by following the proof of \cite[Theorem 1]{LiuTanwLib}, one can obtain the expression~\eqref{eq:NonlinearRelaxation}, which in particular proves UGAS for $\cS_{adw}(\tau,N_0)$ for all $N_0\in \N$ that were studied in \cite{LiuTanwLib}, as stated in Lemma~\ref{Lemma:StrongConsequences}.
Here we provide a novel proof of Proposition~\ref{prop:AneelConditions}, based on the following statement which will be independently used in what follows.

\begin{lemma}\label{Lemma:LinearizingTheFunctions}
Consider $\cF=\{f_1,\dots,f_{\rm m}\}\subset \sol(\R^n)$. Suppose that there exist $\alpha_1,\alpha_2\in \cK_\infty$, $\rho\in \cK$, $\chi\in \cK_\infty$, $\varepsilon>0$ and $V_1,\dots, V_{\rm m}\in  \cLL_0(\R^n,\R)$ such that conditions~\eqref{eq:AneelCOnditions}-\eqref{eq:SmallGainNon_Linear} are satisfied; then, for any $\tau>\tau^\star$, there exist $W_1,\dots W_{\rm m}\in  \cLL_0(\R^n,\R)$, 
$\wt \alpha_1,\wt \alpha_2\in \cK_\infty$ and $\alpha\geq 0$,  such that
\begin{subequations}\label{eq:AneelLinearized}
\begin{equation}\label{eq:Sandwich-NonLinearBB}
\wt \alpha_1(|x|)\leq W_i(x)\leq \wt \alpha_2(|x|),\;\;\;\;\forall \,i\in \cI,\;\forall x\in \R^n,
\end{equation}
\begin{equation}\label{eq:Decreae-NonLinearBB}
D^+_{f_i} W_i(x) \leq -(1+\alpha)\, W_i(x)\;\;\;\;\forall \,i\in \cI,\;\forall x\in \R^n,
\end{equation}
\begin{equation}\label{eq:JumpConditionNOnlinearBB}
W_i(x)\leq e^{\alpha \tau}W_j(x),\;\;\;\;\;\,\;\;\forall\, (i,j)\in \cI^2,\;\forall\,x\in \R^n.
\end{equation}
\end{subequations}
Conversely, if there exist $W_1,\dots, W_{\rm m}\in  \cLL_0(\R^n,\R)$, 
$\wt \alpha_1,\wt \alpha_2\in \cK_\infty$, $\alpha\geq 0$ and $\tau>0$ satisfying~\eqref{eq:AneelLinearized} then there exist $\tau^\star<\tau$ and $\alpha_1,\alpha_2\in \cK_\infty$, $\rho\in \cK$, $\chi\in \cK_\infty$, $\varepsilon>0$ and $V_1,\dots, V_{\rm m}\in  \cLL_0(\R^n,\R)$ such that conditions~\eqref{eq:AneelCOnditions}-\eqref{eq:SmallGainNon_Linear} are satisfied.
\end{lemma}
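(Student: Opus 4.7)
The plan is to prove both implications via a scalar change-of-variables $\phi:\R_+\to\R_+$ of class $\cK_\infty$ applied pointwise to each $V_i$, exploiting the chain rule for Dini derivatives $D^+_{f_i}(\phi\circ V_i)(x)=\phi'(V_i(x))\,D^+_{f_i}V_i(x)$, which holds whenever $\phi\in\cC^1$ is strictly increasing (one obtains it from a one-sided mean value theorem applied to the increment $\phi(V_i(\Phi_i(h,x)))-\phi(V_i(x))$ and continuity of $\phi'$).

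For the forward direction, fix $\tau>\tau^\star$, set $\alpha:=\frac{\tau^\star}{\tau-\tau^\star}$ (so that $(1+\alpha)\tau^\star=\alpha\tau$), and define
\[
\phi(s):=\exp\!\left(\int_1^s\frac{1+\alpha}{\Psi_\varepsilon(r)}\,dr\right),\qquad W_i:=\phi\circ V_i.
\]
Three things need to be verified. First, $\phi\in\cK_\infty$: from $\Psi_\varepsilon(r)\leq\varepsilon r$ (the $s=0$ candidate in the defining minimum) we get $1/\Psi_\varepsilon(r)\geq 1/(\varepsilon r)$, whose improper integral diverges both at $0^+$ and at $+\infty$, so $\phi(0^+)=0$ and $\phi(+\infty)=+\infty$; the sandwich~\eqref{eq:Sandwich-NonLinearBB} then follows with $\wt\alpha_1:=\phi\circ\alpha_1$ and $\wt\alpha_2:=\phi\circ\alpha_2$. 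Second, the decrease bound: using $\Psi_\varepsilon(t)\leq\rho(t)$ (the $s=t$ candidate) and~\eqref{eq:Decreae-NonLinear},
\[
D^+_{f_i}W_i(x)=\phi'(V_i(x))\,D^+_{f_i}V_i(x)\leq-\phi'(V_i(x))\,\Psi_\varepsilon(V_i(x))=-(1+\alpha)\,W_i(x),
\]
which is~\eqref{eq:Decreae-NonLinearBB}. Third, the jump: monotonicity of $\phi$ together with~\eqref{eq:JumpNonlinear} give $W_i(x)\leq\phi(\chi(V_j(x)))$, so it suffices to show $\phi(\chi(s))/\phi(s)\leq e^{\alpha\tau}$ for all $s>0$. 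Taking logarithms, this becomes $(1+\alpha)\int_s^{\chi(s)}\frac{dr}{\Psi_\varepsilon(r)}\leq\alpha\tau$, and the left-hand side is at most $(1+\alpha)\tau^\star=\alpha\tau$ by our choice of $\alpha$.

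For the converse direction I would simply take $V_i:=W_i$, $\alpha_1:=\wt\alpha_1$, $\alpha_2:=\wt\alpha_2$, $\rho(s):=(1+\alpha)s$, $\chi(s):=e^{\alpha\tau}s$, and choose any $\varepsilon>1+\alpha$. Since $\rho(s)+\varepsilon(t-s)$ is linear in $s$, the minimum defining $\Psi_\varepsilon(t)$ is attained at an endpoint, yielding $\Psi_\varepsilon(t)=\min\{\varepsilon,1+\alpha\}\cdot t=(1+\alpha)t$, and hence
\[
\sup_{s>0}\int_s^{e^{\alpha\tau}s}\frac{dr}{(1+\alpha)r}=\frac{\alpha\tau}{1+\alpha}<\tau,
\]
so we may set $\tau^\star:=\frac{\alpha\tau}{1+\alpha}$; the remaining items of~\eqref{eq:AneelCOnditions}-\eqref{eq:SmallGainNon_Linear} are immediate.

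The subtlest point, and the one that dictates the precise value $\alpha=\tau^\star/(\tau-\tau^\star)$, is balancing the two sides of the jump inequality: the factor $(1+\alpha)$ one pays in the decrease rate reappears multiplying $\int_s^{\chi(s)}\frac{dr}{\Psi_\varepsilon(r)}$, so the bound $\alpha\tau$ on the right forces the identity $(1+\alpha)\tau^\star=\alpha\tau$. A minor technical point is that $W_i=\phi\circ V_i$ inherits membership in $\cLL_0(\R^n,\R)$ because $\phi$ is $\cC^1$, hence locally Lipschitz on $(0,+\infty)$, while continuity of $W_i$ at the origin follows from the sandwich bound on $V_i$ and $\phi(0^+)=0$.
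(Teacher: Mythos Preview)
Your proof is correct and follows essentially the same approach as the paper's: the same rescaling $\phi(s)=\exp\big(\int_1^s\frac{1+\alpha}{\Psi_\varepsilon(r)}\,dr\big)$ with the same choice of $\alpha$ satisfying $(1+\alpha)\tau^\star=\alpha\tau$, and the converse via $V_i:=W_i$ with linear $\rho$ and $\chi$. The only cosmetic differences are that the paper packages the jump estimate through the auxiliary function $\wt\chi:=\phi\circ\chi\circ\phi^{-1}$ and a change of variables (you take logs directly), and the paper picks $\varepsilon=1+\alpha$ in the converse while you take any $\varepsilon>1+\alpha$; both yield $\Psi_\varepsilon(t)=(1+\alpha)t$.
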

\begin{rem}[Common Lypunov function] We note that the case $\alpha=0$ in conditions~\eqref{eq:AneelLinearized} corresponds to the fact that any $W_i$ is a \emph{common Lyapunov function} for the considered switched system. Indeed, inequality~\eqref{eq:JumpConditionNOnlinearBB} in this case reads $W_i(x)\leq W_j(x)$, for all $(i,j)\in \cI^2,\;\forall\,x\in \R^n$, thus implying that $W_i\equiv W_j$ for all $(i,j)\in \cI^2$, i.e., all the functions coincide with a unique $W\in \cLL_0(\R^n,\R)$. Then inequality~\eqref{eq:Sandwich-NonLinearBB} will provide the classical positive definetenss property for $W$, while~\eqref{eq:Decreae-NonLinearBB} now reads $D^+_{f_i} W(x) \leq - W(x)$,  for all $i\in \cI$ and all $x\in \R^n$.  
In other words, $W$ is a common Lyapunov function for system~\eqref{eq:SwitchedSystem} (as introduced in~\cite{MolPya89,DayaMar99,Mancilla00}, for example), which is thus UGAS~with respect to $\cS$. This observation is in line with the discussion provided in Remark~\ref{rem:ArbitraryStability}.
\end{rem}

\begin{proof}[Proof of Lemma~\ref{Lemma:LinearizingTheFunctions}]
Let us first suppose that $\alpha_1,\alpha_2\in \cK_\infty$, $\rho\in \cK$, $\chi\in \cK_\infty$, and $V_1,\dots, V_{\rm m}\in \cC^1(\R^n,\R)$ satisfy conditions~\eqref{eq:AneelCOnditions}, for some $\varepsilon>0$ and $\tau^\star\geq 0$ such that \eqref{eq:SmallGainNon_Linear} holds. Consider any $\tau>\tau^\star$ and take $\alpha\geq 0$ such that 
\[
\frac{\tau^\star}{\tau}=\frac{\alpha}{1+\alpha}.
\]
Let us consider the function $\gamma:\R_+\to \R_+$ defined by
\[
\gamma(s):=\begin{cases}
0,\;\;\;&\text{if } s= 0,\\
\text{exp}(\int_1^s \frac{1+\alpha}{\Psi_\varepsilon(r)}\,dr),\;\;\;\;&\text{if } s\neq 0.
\end{cases}
\]
Using the fact that $\Psi_\varepsilon$ is globally Lipschitz (with $\varepsilon$ as Lipschitz constant), in \cite{LiuTanwLib} (see also \cite[Lemmas 11 \& 12]{PralyWnag96}) it is proved that $\gamma\in \cK_\infty \cap \cC^1(\R_+,\R)$. In particular, we have
\[
\gamma'(s)=\frac{1+\alpha}{\Psi_\varepsilon(s)}\gamma(s),\;\;\;\;\forall\,s>0.
\]
We now define $W_i(x):=\gamma(V_i(x))$. It is clear that choosing $\wt \alpha_1=\gamma\circ \alpha_1$ and $\wt \alpha_2=\gamma \circ\alpha_2$, condition~\eqref{eq:Sandwich-NonLinearBB} holds. Consider now any $x\neq 0$, by definition, we have
\[
\begin{aligned}
D^+_{f_i} W_i(x)&=\limsup_{h\to 0^+} \frac{\gamma(V_i(\Phi_i(h,x)))-\gamma( V_i(x))}{h}=\lim_{h\to 0^+}\frac{\gamma(V_i(\Phi_i(h,x)))-\gamma(V_i(x))}{V_i(\Phi_i(h,x))-V_i(x)}\limsup_{h\to 0^+}\frac{V_i(\Phi_i(h,x))-V_i(x)}{h},
\end{aligned}
\]
where the last equality is justified since: \begin{itemize}[leftmargin=*]
\item $D^+_{f_i}V_i(x)<0$  implies that there exists $\bar h>0$ such that $V_i(\Phi_i(h,x))-V_i(x)< 0$ for all $h\in (0,\bar h)$;
\item $\lim_{h\to 0^+}\frac{\gamma(V_i(\Phi_i(h,x)))-\gamma(V_i(x))}{V_i(\Phi_i(h,x))-V_i(x)}$ exists, and it is equal to $\gamma'(V_i(x))\geq 0$, since $\lim_{h\to 0^+}V_i(\Phi_i(h,x))=V_i(x)$.
\end{itemize}
We thus have
\[
\begin{aligned}
D^+_{f_i} W_i(x)&=\lim_{y\to V_i(x)}\frac{\gamma(y)-\gamma(V_i(x))}{y-V_i(x)}\limsup_{h\to 0^+}\frac{V_i(\Phi_i(h,x))-V_i(x)}{h}=\gamma'(V_i(x))D^+_{f_i}V_i(x).
\end{aligned}
\]
Then, we obtain
\[
\begin{aligned}
D^+_{f_i} W_i(x)&=\gamma'(V_i(x))D^+_{f_i}V_i(x)\leq  -\gamma'(V_i(x))\rho(V_i(x))\\&=-\frac{1+\alpha}{\Psi_\varepsilon(V_i(x))}\rho(V_i(x))\gamma(V_i(x))\leq -(1+\alpha)\gamma(V_i(x))=-(1+\alpha)W_i(x),
\end{aligned}
\]
which proves~\eqref{eq:Decreae-NonLinearBB} for any $x\neq 0$. Since~\eqref{eq:Decreae-NonLinearBB} is trivial in the case $x=0$, it remains to prove that~\eqref{eq:JumpConditionNOnlinearBB} holds.
\\
Let us consider the $\cK_\infty$ function $\wt \chi:=\gamma \circ \chi \circ \gamma^{-1}$, by~\eqref{eq:JumpNonlinear} we have
\[
W_i(x)\leq \wt \chi(W_j(x)),\;\;\;\forall (i,j)\in \cI^2,\;\forall\,x\in\R^n.
\]
We then obtain
\[
\begin{aligned}
\frac{1}{(1+\alpha)}\sup_{s>0}\left [\log\left(\frac{\wt \chi(s)}{s}\right)\right] &=\frac{1}{(1+\alpha)}\sup_{s>0}\int_s^{\wt \chi(s)} \frac{1}{r}\;dr
=\sup_{u>0}\int_{\gamma(u)}^{(\gamma\circ \chi)(u)}\frac{1}{(1+\alpha) r}\;dr\\&= \sup_{u>0}\int_{u}^{\chi(u)}\frac{\gamma'(r)}{(1+\alpha) \gamma(r)}\;dr= \sup_{u>0}\int_{u}^{\chi(u)}\frac{1}{\Psi_\varepsilon(r)}\;dr=\tau^\star.
\end{aligned}
\]
This implies that 
\[
\wt \chi(s)\leq e^{\tau^\star (1+\alpha)}s=e^{\tau\alpha}s  ,\;\;\;\forall\,s\in \R_+,
\]
which proves~\eqref{eq:JumpConditionNOnlinearBB}, concluding the first direction of the proof.
\\
The other implication is straightforward. Consider any $\tau>0$ and suppose that the functions $W_1, \dots, W_{\rm m}$ satisfying~\eqref{eq:AneelLinearized} exist (for some $\wt \alpha_1,\wt \alpha_2\in \cK_\infty$ $\alpha\geq 0$ and $\tau>0$). If $\alpha=0$, then we can choose $\chi \equiv \rho\equiv \text{Id}$ and the conditions in~\eqref{eq:AneelCOnditions}-\eqref{eq:SmallGainNon_Linear} trivially hold with $\tau^ \star=0$ and $V_i=W_i$ for any $i\in \cI$. If $\alpha>0$, consider $ \tau^\star<\tau$ such that  $\frac{\tau^\star}{\tau}=\frac{\alpha}{1+\alpha}$, then the functions  $V_i=W_i$ for any $i\in \cI$ satisfy~\eqref{eq:AneelCOnditions}, by choosing $\rho(s):=(1+\alpha)s$, $\chi(s)=e^{\alpha \tau}s$, $\varepsilon=\alpha+1$. Moreover, choosing $\Psi_{\alpha+1}(s)=(1+\alpha)s$, it can be seen that condition~\eqref{eq:SmallGainNon_Linear} holds, i.e.,
\[
\begin{aligned}
\sup_{s>0}\int_{s}^{\chi(s)}\frac{1}{\Psi_{\alpha+1}(r)}&=\sup_{s>0}\int_{s}^{e^{\alpha \tau}s}\frac{1}{(1+\alpha)r}\,dr=\frac{1}{1+\alpha}\sup_{s>0}\int_{s}^{e^{\alpha \tau}s}\frac{1}{r}\,dr\\&=\frac{1}{1+\alpha}\sup_{s>0}\left (\,\ln(e^{\alpha\tau}s)-\ln(s)\right)=\frac{\alpha \tau}{1+\alpha}=\tau^\star,
\end{aligned}
\]
concluding the proof.
\end{proof}

We now prove Proposition~\ref{prop:AneelConditions} using Lemma~\ref{Lemma:LinearizingTheFunctions}.
\begin{proof}[Proof of Proposition~\ref{prop:AneelConditions}]
Using Lemma~\ref{Lemma:LinearizingTheFunctions}, let us consider $W_1,\dots W_{\rm m}\in \cLL_0(\R^n,\R)$, 
$\wt \alpha_1,\wt \alpha_2\in \cK_\infty$ and $\alpha\geq 0$ satisfying~\eqref{eq:Sandwich-NonLinearBB},~\eqref{eq:Decreae-NonLinearBB} and~\eqref{eq:JumpConditionNOnlinearBB}. Consider any $\sigma\in \cS$, and any $t\in \R_+$ and suppose that $t\in [t_k^\sigma, t_{k+1}^\sigma)$, for some $t_k^\sigma\geq 0$. Using~the flow condition \eqref{eq:Decreae-NonLinearBB} together with the comparison principle applied to the continuous function $W_i(\Phi_i(\,\cdot\,,x)):\R_+\to \R^n$ (see for example~\cite[Lemma 3.4]{khalil2002nonlinear}) and applying the jump condition \eqref{eq:JumpConditionNOnlinearBB} recursively, we obtain
\begin{equation}\label{eq:MotivationStrongNotion}
\begin{aligned}
W_{\sigma(t)}(\Phi_\sigma(t,x))&\leq  e^{-(1+\alpha)(t-t^\sigma_k)}W_{\sigma(t^\sigma_k)}(\Phi_\sigma(t^\sigma_k,x))\leq e^{\alpha \tau}e^{-(1+\alpha)(t-t^\sigma_{k-1})}W_{\sigma(t^\sigma_{k-1})}(\Phi_\sigma(t^\sigma_{k-1},x))\leq \dots \\&\leq e^{\alpha \tau N_\sigma(0,t)} e^{-(1+\alpha) t}W_{\sigma(0)}(x).
\end{aligned}
\end{equation}
From \eqref{eq:Sandwich-NonLinearBB}, it follows that
\[
|\Phi_\sigma(t,x)|\leq \wt \alpha_1^{-1}( e^{\alpha \tau N_\sigma(0,t)}e^{-(1+\alpha)t}\wt \alpha_2(|x|))\;\;\forall \sigma\in \cS,\;\;\forall x\in \R^n,\;\;\forall t\in \R_+.
\]
By letting $\eta_1:=\wt \alpha_1^{-1}$ and $\eta_2:=\wt \alpha_2$, we have thus proved~\eqref{eq:NonlinearRelaxation} and this concludes the proof of Proposition~\ref{prop:AneelConditions}.
\end{proof}

In the following statement, we present our main result establishing the equivalence of $\GAS$ of~\eqref{eq:SwitchedSystem} with the existence of functions satisfying~\eqref{eq:AneelCOnditions}-\eqref{eq:SmallGainNon_Linear} (via the intermediate step provided by Lemma~\ref{Lemma:LinearizingTheFunctions}).
% \end{proof}
\begin{thm}\label{thm:Converse} Consider $\cF=\{f_1,\dots, f_{\rm m}\}\subset\sol(\R^n)$ and $\tau>0$. System~\eqref{eq:SwitchedSystem} is $\GAS$ if and only if there exist $W_1,\dots, W_{\rm m}\in\cLL_0(\R^n,\R)$, $\wt \alpha_1,\wt\alpha_2\in \cK_\infty$, $\alpha\geq 0$  satisfying~\eqref{eq:AneelLinearized}.
\end{thm}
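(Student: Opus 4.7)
The sufficiency direction (existence of functions satisfying~\eqref{eq:AneelLinearized} implies $\GAS$) is essentially already contained in the proof of Proposition~\ref{prop:AneelConditions}: the comparison argument culminating in~\eqref{eq:MotivationStrongNotion}, combined with the sandwich~\eqref{eq:Sandwich-NonLinearBB}, yields exactly~\eqref{eq:NonlinearRelaxation} with $\eta_1 := \wt\alpha_1^{-1}$ and $\eta_2 := \wt\alpha_2$. I therefore focus on the converse (necessity) part. Assuming that~\eqref{eq:SwitchedSystem} is $\GAS$ with constants $\eta_1,\eta_2\in \cK_\infty$ and $\alpha\geq 0$ as in~\eqref{eq:NonlinearRelaxation}, for each $i\in \cI$ denote $\cS_i := \{\sigma \in \cS \mid \sigma(0)=i\}$ and define the candidate Lyapunov functions
\[
W_i(x) := \sup_{\sigma \in \cS_i,\, t \geq 0} e^{(1+\alpha)t}\, e^{-\alpha \tau N_\sigma(0,t)}\, \eta_1^{-1}\!\bigl(|\Phi_\sigma(t,x)|\bigr),\qquad x\in \R^n,
\]
the exponential weights being precisely tuned so that the $\GAS$ estimate keeps this supremum finite.

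The verification of~\eqref{eq:AneelLinearized} would proceed in three independent steps. For the \emph{sandwich}~\eqref{eq:Sandwich-NonLinearBB}, the upper bound $W_i(x)\leq \eta_2(|x|)$ is nothing but a rearrangement of~\eqref{eq:NonlinearRelaxation}, while the lower bound $W_i(x)\geq \eta_1^{-1}(|x|)$ follows by evaluating the supremum at $t=0$ along the constant signal $\sigma\equiv i\in \cS_i$. For the \emph{flow inequality}~\eqref{eq:Decreae-NonLinearBB}, I use a time-shift trick: given $\sigma \in \cS_i$ and $h>0$, define $\sigma^{(h)}$ to equal $i$ on $[0,h)$ and $\sigma(\cdot-h)$ on $[h,+\infty)$; since $\sigma(0)=i$ this gluing introduces no additional discontinuity at time $h$, hence $N_{\sigma^{(h)}}(0,t+h)=N_\sigma(0,t)$ and $\Phi_{\sigma^{(h)}}(t+h,x)=\Phi_\sigma(t,\Phi_i(h,x))$. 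Substituting in the definition of $W_i(\Phi_i(h,x))$ and relabelling $t'=t+h$ yields $W_i(\Phi_i(h,x))\leq e^{-(1+\alpha)h}W_i(x)$, from which $D^+_{f_i}W_i(x)\leq -(1+\alpha)W_i(x)$ follows. For the \emph{jump inequality}~\eqref{eq:JumpConditionNOnlinearBB}, I use a dual insertion trick: given $\sigma\in \cS_i$ and $\epsilon>0$, let $\sigma'_\epsilon\in \cS_j$ be obtained by prepending a constant segment of value $j$ on $[0,\epsilon)$ in front of $\sigma(\cdot-\epsilon)$. This introduces exactly one extra discontinuity (at $\epsilon$), so $N_{\sigma'_\epsilon}(0,t+\epsilon)=N_\sigma(0,t)+1$ and $\Phi_{\sigma'_\epsilon}(t+\epsilon,x)=\Phi_\sigma(t,\Phi_j(\epsilon,x))$. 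Testing the definition of $W_j$ at $\sigma'_\epsilon$ and sending $\epsilon\to 0^+$, continuity of $\Phi_j(\cdot,x)$ and of $\eta_1^{-1}$ gives $W_j(x)\geq e^{-\alpha\tau}\, e^{(1+\alpha)t} e^{-\alpha\tau N_\sigma(0,t)}\eta_1^{-1}(|\Phi_\sigma(t,x)|)$; taking the supremum over $\sigma\in \cS_i$ and $t\geq 0$ then delivers $W_i(x)\leq e^{\alpha\tau}W_j(x)$.

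The principal technical obstacle I expect is the regularity requirement $W_i\in \cLL_0(\R^n,\R)$, which is the classical difficulty in converse Lyapunov constructions based on pointwise suprema along trajectories: the vector fields $f_i$ are only locally Lipschitz on $\R^n\setminus\{0\}$ (so $\Phi_\sigma(t,\cdot)$ has a time-dependent Lipschitz constant of exponential type), $\eta_1^{-1}$ need not be locally Lipschitz as a bare $\cK_\infty$ function, and a supremum over an uncountable family of parameters can easily destroy regularity. I would handle this by first invoking a Sontag-type reshaping (in the spirit of Lemma~\ref{lemma:SpeedTransformation}) to replace the gains $\eta_1,\eta_2$ by smooth representatives with controlled slopes, and then exploiting the exponential discount together with the $\GAS$ bound to localize the effective supremum to a bounded range of $t$ on each compact subset of $\R^n\setminus\{0\}$. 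A final Moreau--Yosida inf-convolution of the continuous candidate $W_i$ against a Lipschitz kernel would deliver the required $\cLL_0$ regularity, the three inequalities being preserved (up to constant adjustments absorbed into $\wt\alpha_1,\wt\alpha_2$ and $\alpha$) via standard arguments.
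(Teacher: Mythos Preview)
Your construction of $W_i$ as a weighted supremum over trajectories, your concatenation arguments for the flow and jump inequalities, and your plan for regularity (reshape the gains via Lemma~\ref{lemma:SpeedTransformation} and use the resulting $\varepsilon$-margin to localize the supremum to a bounded time window) coincide with the paper's proof. The only superfluous step is the final Moreau--Yosida regularization: once the supremum is localized to a compact interval $[0,T]$ on each compact subset of $\R^n\setminus\{0\}$, local Lipschitz continuity of $W_i$ follows \emph{directly} from that of the flows $\Phi_\sigma(t,\cdot)$ and of the smoothed $\eta_1^{-1}$ (this is exactly what the paper does in Appendix~\ref{app:LocLip}); an additional inf-convolution would instead require a nontrivial verification that the flow inequality~\eqref{eq:Decreae-NonLinearBB} is preserved, which is not as ``standard'' as you suggest.
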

\begin{proof}
The proof of sufficiency follows from~Proposition~\ref{prop:AneelConditions} and~Lemma~\ref{Lemma:LinearizingTheFunctions}.
\\
\emph{(Necessity):} 
Suppose that the system is $\GAS$, and let us fix $\varepsilon>0$. Recalling Definition~\ref{defn:strongNotion} and Lemma~\ref{lemma:SpeedTransformation}, there exist $\eta_1,  \eta_2\in \cK_\infty$ and $\alpha\geq 0$ such that 
\[
|\Phi_\sigma(t,x)|\leq \eta_1(e^{\alpha\tau N_\sigma(0,t)}e^{-(1+\varepsilon+ \alpha) t} \eta_2(|x|)),\;\;\;\;\forall \sigma \in \cS,\;\forall x\in \R^n,\;\forall t\in \R_+.
\]
Without loss of generality, we can assume that $\eta_1\in \cK_\infty\cap \cC^1(\R_+\setminus\{0\},\R)$ and $\eta_1'(s)>0$ for all $s >0$ (see \cite[Lemma 1]{Kellett2014}).
Let us define $\gamma :=1+\alpha$, we thus have
\begin{equation}\label{eq:BoundSolutionProof}
\frac{e^{\gamma t}}{e^{\alpha \tau N_\sigma(0,t)}}\alpha_1(|\Phi_\sigma(t,x)|)\leq \eta_2(|x|)e^{-\varepsilon t}\leq \eta_2(|x|), \;\;\;\;\forall \sigma \in \cS,\;\forall x\in \R^n,\;\forall t\in \R_+,
\end{equation}
with $\alpha_1=\eta_1^{-1}$.
We define, for every $i\in \cI$, the set
\[
\cS_i:=\{\sigma \in \cS\;\vert\;\sigma(0)=i\},
\]
i.e., the signals that ``start'' with the value equal to $i\in \cI$. 
 We then define, for all $i\in \cI$, the function $W_i:\R^n\to \R$ by
\begin{equation}\label{eq:DefinitionV_i}
W_i(x):=\sup_{\sigma\in \cS_i}\sup_{s\geq 0} \frac{e^{\gamma s}}{e^{\alpha \tau N_\sigma(0,s)}}\alpha_1(|\Phi_\sigma(s,x)|).
\end{equation}
For every $i\in \cI$, let us denote by $\nu_i\in \cS_i$ the constant signal, that is $\nu_i(t)=i$, for all $t\geq 0$.
We have
\[
W_i(x)=\sup_{\sigma\in \cS_i}\sup_{s\geq 0} \frac{e^{\gamma s}}{e^{\alpha \tau N_\sigma(0,s)}}\alpha_1(|\Phi_\sigma(s,x)|)\geq \sup_{s\geq 0} \frac{e^{\gamma s}}{e^{\alpha \tau N_{\nu_i}(0,s)}}\alpha_1(|\Phi_{\nu_i}(s,x)|)\geq \frac{1}{e^{\alpha \tau}}\alpha_1(|x|),
\]
where in the last inequality, we have chosen $s=0$ as sample time.
On the other hand, equation~\eqref{eq:BoundSolutionProof} implies
\[
W_i(x)\leq \eta_2(|x|),\;\;\forall \,i\in \cI,\;\forall x\in \R^n,
\]
and we have thus proved that the $W_i$, for each $i \in \cI$, satisfies the inequality~\eqref{eq:Sandwich-NonLinearBB} with $\wt \alpha_1=\frac{1}{e^{\alpha\tau}}\eta_1^{-1}$ and $\wt \alpha_2 = \eta_2$.

Since from now on we use concatenation arguments, we refer to Appendix~\ref{app:Concat} for the main definitions and results.
We now prove inequality~\eqref{eq:Decreae-NonLinearBB}.
Considering any $i\in \cI$, any $x\in \R^n$ and any $t\geq 0$, we have
\[
\begin{aligned}
W_i(\Phi_i(t,x))=\sup_{\sigma\in \cS_i}\sup_{s\geq 0} \frac{e^{\gamma s}}{e^{\alpha \tau N_\sigma(0,s)}}\alpha_1(|\Phi_\sigma(s,\Phi_i(t,x))|)=\sup_{\sigma\in \cS_i}\sup_{s\geq 0} \frac{e^{\gamma s}}{e^{\alpha \tau N_\sigma(0,s)}}\alpha_1(|\Phi_{\nu_i\diamond_t \sigma}(s+t,x)|).
\end{aligned}
\] 
Recalling Lemma~\ref{lemma:Concatenation} in Appendix~\ref{app:Concat}, for any $\sigma\in \cS_i$ and for any $t\in \R_+$, we have  $\nu_i\diamond_t \sigma\in \cS_i$ and $N_{\nu_i\diamond_t \sigma}(0,s+t)= N_{\sigma}(0,s)$ for any $s\geq 0$. We thus proceed as follows:
\[
\begin{aligned}
W_i(\Phi_i(t,x))&= \sup_{\sigma\in \cS_i}\sup_{s\geq 0} \frac{e^{\gamma s}}{e^{\alpha \tau N_{\nu_i\diamond_t \sigma}(0,s+t)}}\alpha_1(|\Phi_{\nu_i\diamond_t \sigma}(s+t,x)|)\\&\leq \sup_{\psi\in \cS_i}\sup_{r \geq t} \frac{e^{\gamma(r-t)}}{e^{\alpha \tau N_\gamma(0,r)}}\alpha_1(|\Phi_{\psi}(r,x)|)\leq e^{-\gamma t}\sup_{\psi\in \cS_i}\sup_{r\geq 0} \frac{e^{\gamma r}}{e^{\alpha \tau N_\gamma(0,r)}}\alpha_1(|\Phi_{\psi}(r,x)|)=e^{-\gamma t} W_i(x).
\end{aligned}
\]
We have thus proved that $W_i(\Phi_i(t,x))\leq e^{-\gamma t} W_i(x)$, for all $i\in \cI$, for all $x\in \R^n$ and all $t\in \R_+$. If the functions $W_i$ are locally Lipschitz on $\R^n \setminus \{0\}$ (as we will prove in what follows), this implies~\eqref{eq:Decreae-NonLinearBB}. Indeed, it holds that 
\[
D_{f_i}^+W_i(x)=\limsup_{h\to 0^+}\frac{W_i(\Phi_i(h,x))-W_i(x)}{h}\leq \limsup_{h\to 0^+}\frac{e^{-\gamma h}W_i(x)-W_i(x)}{h}=W_i(x)\lim_{h\to 0}\frac{e^{-\gamma h}-1}{h}=-\gamma W_i(x),
\]
for any $x\in \R^n$.

We now prove~\eqref{eq:JumpConditionNOnlinearBB}. Since $e^{\alpha\tau}\geq 1$, the case $i=j$ is trivial, we thus suppose that $i\neq j$. The core idea is to concatenate the constant signal $\nu_i$ with an arbitrary $\psi\in \cS_j$, on an initial interval of arbitrarily short length. The obtained signal will be in $\cS_i$, but the corresponding solutions will be close, in a sense we clarify, to the ones corresponding to  $\psi$, if the initial interval is small.
More formally, for every $x\in \R^n$ and for an arbitrary $\delta>0$, we have
\[
\begin{aligned}
  W_i(x)&=\sup_{\sigma\in \cS_i}\sup_{s\geq 0} \frac{e^{\gamma s}}{e^{\alpha \tau N_\sigma(0,s)}}\alpha_1(|\Phi_\sigma(s,x)|)\geq \sup_{\psi\in \cS_j}\sup_{s\geq 0} \frac{e^{\gamma s}}{e^{\alpha \tau N_{\nu_i\diamond_\delta \psi}(0,s)}}\alpha_1(|\Phi_{\nu_i\diamond_\delta \psi}(s,x)|),
  \\&\geq \sup_{\psi\in \cS_j}\sup_{r\geq 0} \frac{e^{\gamma (r+\delta)}}{e^{\alpha \tau N_{\nu_i\diamond_\delta \psi}(0,r+\delta)}}\alpha_1(|\Phi_\psi(r,\Phi_i(\delta,x)|),
\end{aligned}
\]
where, in the last step, we restricted the supremum over $s\geq \delta$ and we performed the change of time-variable $s=\delta+r$.
Since in Lemma~\ref{lemma:Concatenation} of Appendix~\ref{app:Concat}, we proved that $N_{\nu_i\diamond_\delta \psi}(0,r+\delta)=N_\psi(0,r)+N_{\nu_i}(0,\delta)= N_\psi(0,r)+1$ for every $\psi\in \cS_j$ and every $r\geq \delta$, we have 
\[
\begin{aligned}
   W_i(x)&\geq \sup_{\psi\in \cS_j}\sup_{r\geq 0} \frac{e^{\gamma (r+\delta)}}{e^{\alpha \tau (N_\psi(0,r)+1)}}\alpha_1(|\Phi_{\psi}(r,\Phi_i(\delta,x))|)\\
& \geq \frac{e^{\gamma \delta}}{e^{\alpha \tau}}\sup_{\psi\in \cS_j}\sup_{r\geq 0} \frac{e^{\gamma  r}}{e^{\alpha \tau N_\psi(0,r)}}\alpha_1(|\Phi_{\psi}(r,\Phi_i(\delta,x))|)= \frac{e^{\gamma \delta}}{e^{\alpha \tau}}W_j(\Phi_i(\delta, x)).
\end{aligned}
\]
Since the previous inequality holds for any $\delta>0$ and $\lim_{\delta\to 0^+}\frac{e^{\gamma \delta}}{e^{\alpha \tau}}W_j(\Phi_i(\delta, x))=\frac{1}{e^{\alpha \tau}}W_j(x)$, the inequality in \eqref{eq:JumpConditionNOnlinearBB} holds.
\\
It remains to prove that $W_i\in \cLL_0(\R^n,\R)$, for all $i\in \cI$.
This can be done substantially following the arguments in~\cite[Section 5]{TeelPraly2000}. The proof is technical and thus only sketched in Lemma~\ref{lemma:LocLipschitz} in Appendix, to avoid breaking the flow of the presentation.
\end{proof}

\begin{rem}
The contribution of Theorem~\ref{thm:Converse} is somehow bi-fold. From one side it provides a multiple Lyapunov functions characterization of $\GAS$ property introduced in Definition~\ref{defn:strongNotion}. On the other side, it also formally describes the conservatism of Lyapunov sufficient conditions as in~Proposition~\ref{prop:AneelConditions}, closing the gap of a long and fruitful story of Lyapunov conditions for average dwell-time stability that can be traced back to~\cite{HesMor99}. This equivalence has been proved in a generic \emph{non-linear} subsystems setting. 
It is well-known that in such general non-linear case, non-global/practical stability phenomena can arise in a switching systems context, see for example~\cite{DelROsTan22,LiuTanwLib} and references therein. For simplicity and conciseness, we did not provide in this paper the local/practical versions of Theorem~\ref{thm:Converse} and this line of research is open for future investigation.
\end{rem}

\section{Linear Subsystems Case}\label{sec:Linear}
In this section, we specialize Theorem~\ref{thm:Converse} to the linear case. Using linearity and following the results in~\cite{Lib03,AngeliNote} and~\cite[Chapter 5]{BacRosier}, it can be proved that in this context, given any set $\wt \cS\subseteq \cS$ closed under time right-shifting\footnote{$\wt\cS\subseteq \cS$ is \emph{closed under time right-shifting} if $\sigma\in \wt \cS$ $\Leftrightarrow$ $\sigma(\cdot+t)\in \wt \cS$,  $\forall t\in \R_+$.}, (UGAS) w.r.t.~$\wt \cS$ imply ($\text{UGES}_\rho$) (for a certain $\rho>0$) w.r.t.~$\wt \cS$. Similarly, it can be proved that, for linear switched systems, $\GAS$ implies $\ES$, for a certain $\rho>0$. For this reason, in this section we focus on the $\ES$ notion provided in Definition~\ref{defn:strongNotion}.

As for the characterization of $\text{UGES}_\rho$ w.r.t.~$\cS_\dw(\tau)$ provided in~\cite{Wirth2005} (and reported in Proposition~\ref{prop:ConverseDwellTime}), we show that $\ES$ of~\eqref{eq:LinearSwitchedSystem} is equivalent to the existence of multiple Lyapunov \emph{norms}, with properties similar to the ones provided in Theorem~\ref{thm:Converse} for the non-linear case.
\begin{cor}\label{cor:ConverseLinear}
Given $\cA=\{A_1,\dots A_{\rm m}\}\subset \R^{n\times n}$, $\tau>0$ and $\rho>0$, system~\eqref{eq:LinearSwitchedSystem} is $\ES$ if and only if there exist norms $v_1,\dots, v_{\rm m}:\R^n\to \R$ and $\alpha\geq 0$ such that
\begin{subequations} \label{eq:daad}
\begin{align}
D^+_{A_i}v_i(x)&\leq -(\rho+\alpha) v_i(x), \;\;\;\forall i\in \cI, \;\forall x\in \R^n,\\
v_i(x)&\leq e^{\alpha\tau} v_j(x),\;\;\;\forall x\in \R^n, \;\forall \;(i,j)\in \cI^2.
\end{align}
\end{subequations}
\end{cor}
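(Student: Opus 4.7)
The plan is to mirror the proof of Theorem~\ref{thm:Converse} and then use linearity of the subsystems to upgrade the candidate Lyapunov functions to norms, at which point the $\cK_\infty$ sandwich bounds collapse into the familiar equivalence of norms on $\R^n$. For sufficiency, assuming $v_1,\dots,v_{\rm m}$ and $\alpha\geq 0$ satisfy~\eqref{eq:daad}, I would reproduce the induction over switching instants from the proof of Proposition~\ref{prop:AneelConditions}: on each interval $[t_k^\sigma,t_{k+1}^\sigma)$ the flow inequality and the comparison principle give $v_{\sigma(t_k^\sigma)}(\Phi_\sigma(t,x))\leq e^{-(\rho+\alpha)(t-t_k^\sigma)}v_{\sigma(t_k^\sigma)}(\Phi_\sigma(t_k^\sigma,x))$, and at each switching instant the jump inequality picks up a factor $e^{\alpha\tau}$; iterating yields
\[
v_{\sigma(t)}(\Phi_\sigma(t,x))\leq e^{\alpha\tau N_\sigma(0,t)}e^{-(\rho+\alpha)t}v_{\sigma(0)}(x).
\]
Equivalence of norms on $\R^n$ provides $c,C>0$ with $c|x|\leq v_i(x)\leq C|x|$ for all $i\in\cI$, and dividing delivers~\eqref{eq:LinearRElaxation} with $M=C/c$.

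For necessity, assume system~\eqref{eq:LinearSwitchedSystem} is $\ES$, so that (after applying the straightforward exponential analogue of Lemma~\ref{lemma:SpeedTransformation}) there exist $M>0$ and $\alpha\geq 0$ with $|\Phi_\sigma(t,x)|\leq M\,e^{\alpha\tau N_\sigma(0,t)}e^{-(\rho+\alpha)t}|x|$. Following the construction of Theorem~\ref{thm:Converse} with $\alpha_1=\mathrm{Id}$, I would set
\[
v_i(x):=\sup_{\sigma\in\cS_i}\sup_{s\geq 0}\frac{e^{(\rho+\alpha)s}}{e^{\alpha\tau N_\sigma(0,s)}}|\Phi_\sigma(s,x)|,\qquad i\in\cI,
\]
where $\cS_i:=\{\sigma\in\cS\;\vert\;\sigma(0)=i\}$, as in the proof of that theorem. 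The $\ES$ bound yields $v_i(x)\leq M|x|$, while specializing to the constant signal $\nu_i\equiv i$ and $s=0$ gives $v_i(x)\geq|x|$. The decisive new observation is that linearity of each subsystem makes $\Phi_\sigma(s,\cdot)$ a linear map on $\R^n$ for every fixed $\sigma$ and $s$, whence $v_i(\lambda x)=|\lambda|v_i(x)$ and
\[
v_i(x+y)\leq \sup_{\sigma,s}\frac{e^{(\rho+\alpha)s}}{e^{\alpha\tau N_\sigma(0,s)}}\bigl(|\Phi_\sigma(s,x)|+|\Phi_\sigma(s,y)|\bigr)\leq v_i(x)+v_i(y);
\]
combined with $v_i\geq|\cdot|$ this shows $v_i$ is a norm and, a fortiori, globally Lipschitz, bypassing the locally-Lipschitz verification of Appendix~\ref{app:Concat}.

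The two inequalities in~\eqref{eq:daad} then follow from the concatenation identities of Lemma~\ref{lemma:Concatenation}, exactly as in Theorem~\ref{thm:Converse}. For the flow bound, substituting $e^{A_i t}x$ for $x$ in the definition of $v_i$ and using $N_{\nu_i\diamond_t\sigma}(0,s+t)=N_\sigma(0,s)$ together with a change of time variable produces $v_i(e^{A_i t}x)\leq e^{-(\rho+\alpha)t}v_i(x)$, and taking the Dini derivative at $t=0^+$ delivers $D^+_{A_i}v_i(x)\leq-(\rho+\alpha)v_i(x)$. For the jump bound, concatenating $\nu_i$ on $[0,\delta)$ with an arbitrary $\psi\in\cS_j$ and letting $\delta\to 0^+$ yields $v_j(x)\leq e^{\alpha\tau}v_i(x)$, and exchanging the roles of $i$ and $j$ supplies the symmetric form stated in~\eqref{eq:daad}. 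I do not anticipate a substantial obstacle: the conceptual work is already done in Theorem~\ref{thm:Converse}, and the only delicate point is exponent bookkeeping, namely checking that the exponential version of Lemma~\ref{lemma:SpeedTransformation} produces the same $\alpha$ simultaneously in the decay rate $\rho+\alpha$ and in the jump factor $e^{\alpha\tau}$, which is a direct rescaling computation of the type already carried out there.
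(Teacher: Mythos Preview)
Your proposal is correct and matches the paper's proof essentially verbatim: the paper defines the same $v_i$, invokes the concatenation arguments from the proof of Theorem~\ref{thm:Converse} for~\eqref{eq:daad}, and uses linearity of the flow maps to verify homogeneity and the triangle inequality. Your invocation of an exponential analogue of Lemma~\ref{lemma:SpeedTransformation} is unnecessary, since the definition of $\ES$ in~\eqref{eq:LinearRElaxation} already delivers the bound $|\Phi_\sigma(t,x)|\leq M\,e^{\alpha\tau N_\sigma(0,t)}e^{-(\rho+\alpha)t}|x|$ with the same $\alpha$ appearing in both exponents.
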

\begin{proof}
The proof basically follows from Theorem~\ref{thm:Converse}. The sufficiency is trivial. For the the necessity, recalling Definition~\ref{defn:strongNotion}, there exist $\alpha\geq 0$ and $M\geq 1$ such that 
\begin{equation}\label{eq:DecraseToolNorms}
|\Phi_\sigma(t,x)|\leq M e^{\alpha \tau N_\sigma(0,t)}e^{-(\rho+\alpha)t}|x|,\;\;\;\forall\,\sigma\in \cS,\;\forall\,x\in \R^n,\,\forall\,t\in \R_+.
\end{equation}
 Recalling that $\cS_i=\{\sigma \in \cS\;\vert\;\sigma(0)=i\}$, we define, for every $i\in \cI$,
\[
  v_i(x):=\sup_{\sigma\in \cS_i}\sup_{s\geq 0}\frac{e^{(\rho+\alpha)t}}{e^{\alpha \tau N_\sigma(0,s)}}|\Phi_\sigma(s,x)|.
\]
By~\eqref{eq:DecraseToolNorms} it holds that
\[
  \frac{1}{e^{\alpha\tau}}|x|\leq v_i(x)\leq M|x|,\;\;\;\forall \;i\in \cI,\;\forall x\in \R^n.
\]
The inequalities in~\eqref{eq:daad} can now be proved with the arguments presented in proof of Theorem~\ref{thm:Converse}. 
It remains to show that $v_i$ are norms. For this, we recall that, by linearity, the flows maps of~\eqref{eq:LinearSwitchedSystem} are linear, i.e.,
\[
  x\mapsto \Phi_\sigma(t,x)\;\text{ is linear},\;\;\forall \,\sigma\in \cS,\,\forall \,t\in \R_+. 
\]
For this reason, for any $\lambda\in \R$, we have 
\[
 v_i(\lambda x)=\sup_{\sigma\in \cS_i}\sup_{s\geq 0}\frac{e^{-(\rho+\alpha)t}}{e^{\alpha\tau N_\sigma(0,s)}}|\Phi_\sigma(s,\lambda x)|=|\lambda| \sup_{\sigma\in \cS_i}\sup_{s\geq 0}\frac{e^{-(\rho+\alpha)t}}{e^{\alpha\tau N_\sigma(0,s)}}|\Phi_\sigma(s, x)|=|\lambda| v_i(x),
\]
while for the triangular inequality, considering $x_1,x_2\in \R^n$ we have
\[
\begin{aligned}
  v_i(x_1+x_2)&=\sup_{\sigma\in \cS_i}\sup_{s\geq 0}\frac{e^{-(\rho+\alpha)t}}{e^{\alpha \tau N_\sigma(0,s)}}|\Phi_\sigma(s, x_1+x_2)|\\
& \leq \sup_{\sigma\in \cS_i}\sup_{s\geq 0}\frac{e^{-(\rho+\alpha)t}}{e^{\alpha \tau N_\sigma(0,s)}}|\Phi_\sigma(s, x_1)|+\sup_{\sigma\in \cS_i}\sup_{s\geq 0}\frac{e^{-(\rho+\alpha)t}}{e^{\alpha \tau N_\sigma(0,s)}}|\Phi_\sigma(s,x_2)|=v_i(x_1)+v_i(x_2),
\end{aligned}
\]
concluding the proof.
\end{proof}

We note here for historical reasons that the conditions of Corollary~\ref{cor:ConverseLinear}, when restricted to \emph{quadratic norms} (and thus loosing the necessity), read as follow:\\
For a fixed $\rho>0$, if there exists $P_1,\dots, P_{\rm m}\succ 0$, $\alpha\geq 0$ and $\nu\geq 1$ such that
\begin{equation}\label{eq:QuadraticProblem}
\begin{aligned}
P_iA_i+A_i^\top P_i\prec -2(\alpha+\rho) P_i,\\
P_i\preceq \nu^2 P_j,
\end{aligned}
\end{equation}
then the system~\eqref{eq:LinearSwitchedSystem} is $\ES$ for $\tau\geq \frac{\ln(\nu)}{\alpha}$.

These inequalities exactly correspond to the conditions provided in the seminal paper~\cite[Theorem 2]{HesMor99} (modulo some changes in the notation).

\begin{rem}[Minimum (average) dwell-time]
Given $A\in \R^{n\times n}$, we introduce the~\emph{spectral abscissa of $A$}, defined as $\lambda(A) : =\max_{i\in\{1,\dots, n\}} \text{Re}(\lambda_i)$, where $\lambda_1,\dots,\lambda_n$ denote the (possibly complex) eigenvalues of $A$. Given an arbitrary norm $\|\cdot\|$ in $\R^n$, and its associated operator norm\footnote{Given a norm $\|\cdot\|:\R^n\to \R$, the \emph{associated operator norm} on $\R^{n\times n}$ is defined by $\|A\|:=\sup_{x\in \R^n\setminus\{0\}}\frac{\|Ax\|}{\|x\|}$, for $A\in \R^{n\times n}$.}, for every $\varepsilon>0$ there exists $M_\varepsilon\geq 1$ such that $\|e^{At}\|\leq M_\varepsilon e^{(\lambda(A)+\varepsilon)t}$, for all $t\in \R_+$. A matrix $A$ is said to be \emph{Hurwitz} if $\lambda(A)<0$. Let us now consider $\cA=\{A_i\}_{i\in \cI}$, and suppose $A_i$ is Hurwitz for all $i\in \cI$, we introduce the notation $\lambda(\cA):=\max_{i\in \cI}\lambda(A_i)<0$. It can be easily proved that, for every $0<\rho<|\lambda(\cA)|$, there exist $\alpha\geq 0$ and $\tau>0$ (large enough) and norms $v_1,\dots, v_{\rm m}:\R^n\to \R$ such that the conditions in Corollary~\ref{cor:ConverseLinear} are satisfied. Thus, the conditions in Corollary~\ref{cor:ConverseLinear}, once we fix a desired and feasible decay rate $\rho<|\lambda(\cA)|$,  can be used to provide a ``safety'' value of $\tau$, for which~\eqref{eq:LinearSwitchedSystem} is $\ES$. If we are able to search candidate Lyapunov norms over the whole class of norms, and to arbitrarily vary the parameter $\alpha\in \R_+$, Corollary~\ref{cor:ConverseLinear} ensures that we will recover the best value for $\tau$, i.e. the so-called \emph{minimal (average) dwell-time}. More formally, given  $\cA=\{A_1,\dots, A_{\rm m}\}$ and $\rho<|\lambda(\cA)|$, we define
\[
\overline \tau_{min}(\cA,\rho):=\inf\left\{\tau\geq 0\;\vert\;\text{system~\eqref{eq:LinearSwitchedSystem} is $\ES$} \right\}, 
\]
which, by the discussion above, is bounded  for any $\rho<\lambda(\cA)$.
The problem of computing/estimating $\overline \tau_{min}(\cA,\rho)$ is challenging, and it can be considered related to the celebrated open question proposed in~\cite{Hespanha04} (see also~\cite{ChiMas17} for a recent overview).

Rephrasing Corollary~\ref{cor:ConverseLinear} in the light of above discussions, and using the notation $\cN_n$ for the set of norms over $\R^n$, we arrive at
\begin{equation}\label{eq:OptiProble}
\begin{aligned}
\overline \tau_{min}(\cA,\rho)&=\inf_{\tau,\alpha,v_1,\dots v_{\rm m}} \quad  \tau\\
&\textrm{ s.t. } \;\tau\geq 0,\;\alpha\geq 0,\;v_1,\dots, v_{\rm m}\in \cN_n,  \\ & \;\;\;\;\;\;\text{ and conditions~\eqref{eq:daad} are satisfied.}
\end{aligned}
\end{equation}
The optimization problem~\eqref{eq:OptiProble} characterizes $\overline \tau_{min}(\cA,\rho)$, but has the following drawbacks: it requires to search over the whole class of norms and it has a possibly unbounded variable $\alpha\geq 0$.
The first issue can be handled by relaxation, restricting the functional space in which the optimization is performed, for instance, considering quadratic norms (as in~\eqref{eq:QuadraticProblem}), SOS polynomials, polyhedral functions (as in~\cite{HafTan23}), etc. These relaxations  ``convexify'' the problem (for a fixed $\alpha\geq 0$), paying the price of loosing the optimality, and thus only providing upper bounds for $\overline \tau_{min}(\cA,\rho)$.  The second issue (the dependence from $\alpha$) can be handled by line search, see for example~\cite{HafTan23} for the formal discussion.
\end{rem} 
\begin{figure}[t!] 
\centering
\includegraphics[width=.68\linewidth, height =.46\linewidth]{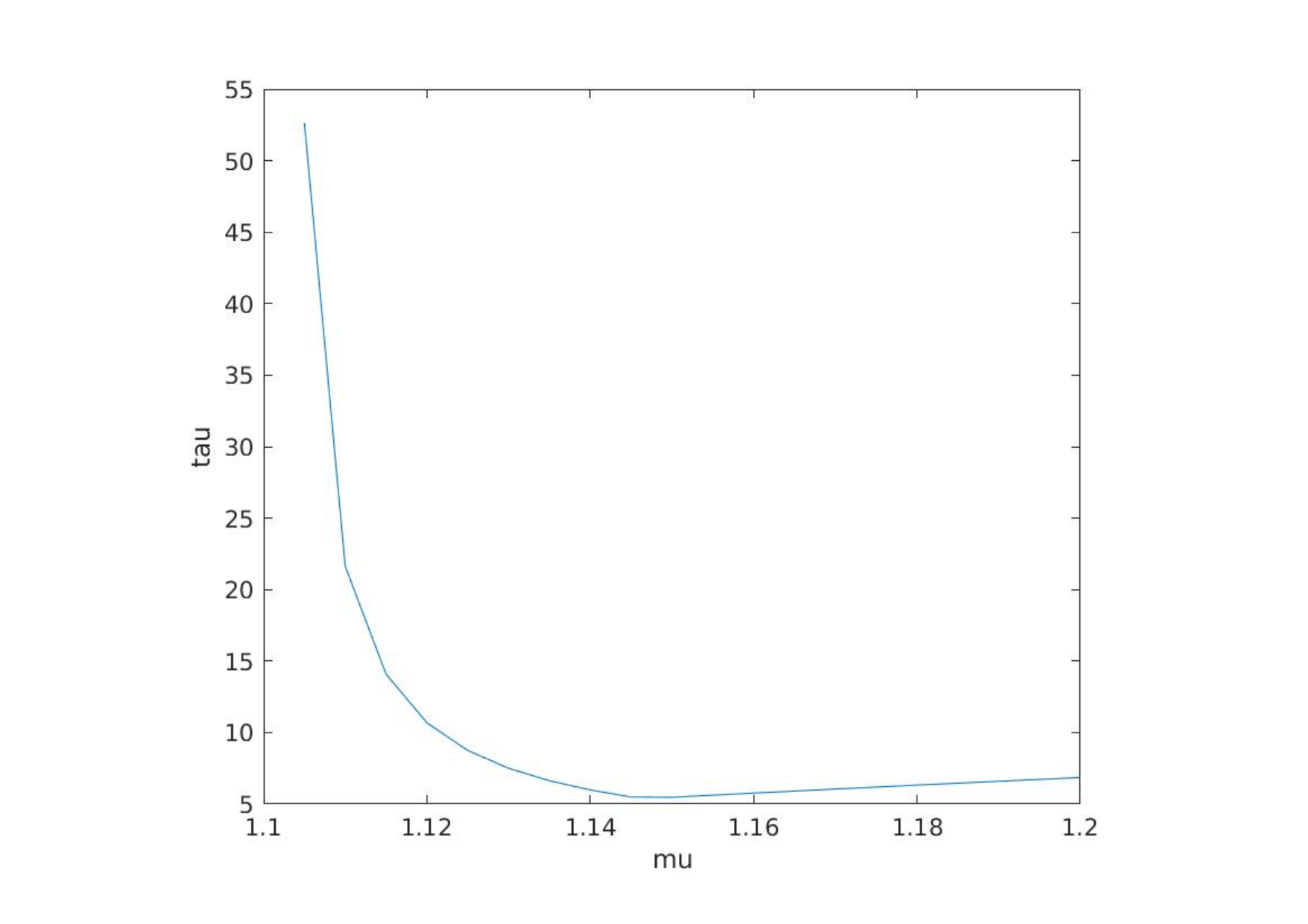}
    \caption{The graph of the minimum $\tau$ in Example~1 obtained by the techniques of \cite{HafTan23}, parametrized by the value of $\mu=e^{\alpha\tau}$.}\label{fig:Parameters}
\end{figure}
\setcounter{myexample}{0}{
\begin{example}{\emph{(Continued)}}
We consider again the planar switched linear system considered in~Example~\ref{ex:ExampleInstability}. We have already proved that such system is UGES w.r.t.~$\cS_{\dw}( \tau_\star)$ with $\tau_\star=2.1$ and that is unstable for $\cS_{\ad}(\tau_\star,2)$.
Thus, it is certainly not~$\tau_\star\text{-UGEB}_\rho$ or, in other words, we have  $\overline \tau_{min}(\cA,\rho)>\tau_\star$,  for any $\rho\in (0,\, |\lambda(\cA)|)=(0, \,0.03)$. We now use the conditions of Corollary~\ref{cor:ConverseLinear} to provide upper bounds on $\overline \tau_{min}(\cA,\rho)$ for a small value of $\rho > 0$ (chosen according to machine precision). In particular, we use the numerical scheme presented in~\cite{HafTan23} in which the research in~\eqref{eq:OptiProble} is restricted over a class of homogeneous functions with polyhedral level sets, and the variation of the parameter $\alpha$ is handled by line-search. The results are illustrated  in Figure~\ref{fig:Parameters}. The best upper bound for $\tau$ is obtained considering $e^{\alpha \tau}=\mu= 1.15$ and it is equal to $\overline \tau_{num}=5.4744$. This proves that, for any $\tau>\overline\tau_{num}$ the system is $\ES$. Note that, as somehow predicted by the discussion provided in Subsection~\ref{subsec:Counter}, the computed upper bound $\overline \tau_{num}$ of $\overline \tau_{min}(\cA,\rho)$ is considerably higher ($2.6$ times) than the upper bound for the minimal \emph{dwell-time}, i.e. the minimal $\tau$ for which the system is uniformly exponentially stable w.r.t.~$\cS_{\dw}(\tau)$.   
\end{example}

\section{Conclusions}\label{sec:conclu}
As an open question for future research, we propose the following conjecture, for which the preformed analysis did not allow us to provide a complete answer.

\begin{conjecture}\label{conj:Conje}
Consider any $\tau>0$. System~\eqref{eq:SwitchedSystem} is $\GAS$ \emph{if and only if} it is UGAS w.r.t.~$\cS_{\ad}(\tau,N_0)$ for any $N_0\in \N$.
Given~$\rho>0$, it is $\ES$ \emph{if and only if} it is  $\text{UGES}_\rho$ w.r.t.~$\cS_{\ad}(\tau,N_0)$ for all $N_0\in \N$.
\end{conjecture}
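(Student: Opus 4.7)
The forward implication in both equivalences is exactly Lemma~\ref{Lemma:StrongConsequences}(1), so the plan concerns the converse direction: starting from UGAS (resp.\ $\text{UGES}_\rho$) with respect to $\cS_{\ad}(\tau,N_0)$ for every $N_0\in\N$, with associated $\beta_{N_0}\in\cKL$ (resp.\ constants $M_{N_0}\geq 1$), one would like to deduce $\GAS$ (resp.\ $\ES$) in the sense of Definition~\ref{defn:strongNotion}. Writing $N_\sigma(0,t)=(N_\sigma(0,t)-t/\tau)+t/\tau$, the target inequalities~\eqref{eq:NonlinearRelaxation} and~\eqref{eq:LinearRElaxation} amount to showing that $\beta_{N_0}$ (resp.\ $M_{N_0}$) admits a bound of the form $\tilde\eta_1\!\left(e^{\alpha\tau N_0}e^{-t}\tilde\eta_2(r)\right)$ (resp.\ $C e^{\alpha\tau N_0}e^{-\rho t}$), \emph{uniformly in} $N_0$. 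Once such uniform control is available, the Lyapunov construction in the proof of Theorem~\ref{thm:Converse} (see~\eqref{eq:DefinitionV_i}) applies verbatim, producing functions satisfying~\eqref{eq:AneelLinearized}, and the conjecture follows.

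The core of the plan is therefore to establish a \emph{semigroup / concatenation} inequality that relates $\beta_{N_0+1}$ to $\beta_{N_0}$ and $\beta_0$ (resp.\ that relates $M_{N_0+1}$ and $M_{N_0}$). Intuitively, given $\sigma\in\cS_{\ad}(\tau,N_0+1)$, one should select a time $s^\star\geq 0$ such that $\sigma$ restricted to $[0,s^\star)$ ``absorbs'' one unit of the chattering budget and the time-shifted tail $\sigma(\cdot+s^\star)$ lies in $\cS_{\ad}(\tau,N_0)$. Invoking the concatenation tool of Lemma~\ref{lemma:Concatenation}, this would yield $|\Phi_\sigma(t,x)|\leq \beta_{N_0}\!\left(\beta_0(|x|,s^\star),\,t-s^\star\right)$ for $t\geq s^\star$. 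Iterating in $N_0$ gives, in the linear case, $M_{N_0}\leq M_1^{N_0}M_0 = e^{\log(M_1)N_0}M_0$, exactly the exponential-in-$N_0$ growth needed with $\alpha\tau=\log(M_1)$; in the nonlinear setting, one would then apply Sontag's $\cKL$-lemma (\cite[Proposition~7]{Son98}, see also~\cite{Kellett2014}) to reshape the iterated estimate into the form of Definition~\ref{defn:strongNotion}.

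The principal obstacle is the existence and admissibility of such a time $s^\star$: the constraint $N_\sigma(s,t)\leq N_0+1+(t-s)/\tau$ can be saturated on multiple subintervals, and the shift $\sigma(\cdot+s^\star)$ of a signal in $\cS_{\ad}(\tau,N_0+1)$ is only guaranteed to remain in $\cS_{\ad}(\tau,N_0+1)$, not in the strictly smaller class $\cS_{\ad}(\tau,N_0)$. A natural candidate $s^\star=\inf\{s\geq 0 \mid \sigma(\cdot+s)\in\cS_{\ad}(\tau,N_0)\}$ might fail to be finite, so a more refined splitting argument — perhaps based on bounding the ``excess'' $\sup_{s\leq t}[N_\sigma(0,s)-s/\tau-N_0]$ along an increasing sequence of times — appears necessary. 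Even granting such a decomposition, the nonlinear case carries the additional difficulty that the composition $\beta_{N_0}\circ\beta_0$ need not preserve the exponential-in-$N_0$ structure, and so the argument may succeed first only in the linear setting of Corollary~\ref{cor:ConverseLinear}, where homogeneity collapses $\cKL$ composition into multiplication of constants, before being transported (via suitable homogenization of $\beta_{N_0}$) to the general case.
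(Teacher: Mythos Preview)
The statement you are attempting to prove is labelled \emph{Conjecture} in the paper and is explicitly left open: the authors state in Section~\ref{sec:conclu} that ``the performed analysis did not allow us to provide a complete answer'' and that only the ``only if'' part has been established, via Item~(1) of Lemma~\ref{Lemma:StrongConsequences}. There is therefore no proof in the paper to compare your proposal against.

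Your proposal correctly isolates the forward direction as Lemma~\ref{Lemma:StrongConsequences}(1) and correctly identifies the essential difficulty in the converse direction: the hypothesis only hands you a family $\{\beta_{N_0}\}_{N_0\in\N}$ (or $\{M_{N_0}\}_{N_0\in\N}$) with no a~priori control on how these objects grow with $N_0$, whereas $\GAS$/$\ES$ demand precisely exponential-in-$N_0$ growth. The concatenation/splitting strategy you sketch is natural, but the obstacle you yourself flag is genuine and, as far as the paper is concerned, unresolved: for $\sigma\in\cS_{\ad}(\tau,N_0+1)$ there is no guarantee that any right-shift $\sigma(\cdot+s^\star)$ lands in $\cS_{\ad}(\tau,N_0)$, because the average-dwell-time constraint can be saturated on arbitrarily late subintervals. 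Moreover, even if such an $s^\star$ existed, the restriction $\sigma|_{[0,s^\star)}$ need not lie in any class for which you already have a bound (it may still carry the full $N_0+1$ chattering budget), so the recursion $M_{N_0+1}\leq C\,M_{N_0}$ does not follow from the decomposition as written. Your remarks about the nonlinear case compounding the difficulty via composition of $\cKL$ functions are also well taken.

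In short: your plan is a reasonable outline of what a proof \emph{would} have to accomplish, and you have honestly flagged the gaps, but those gaps are real and coincide with the reason the paper poses this as an open question rather than a theorem.
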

 Conjecture~\ref{conj:Conje} aims to clarify the relations between the $\GAS$ (resp. $\ES$) property introduced in Definition~\ref{defn:strongNotion} for which we are able to provide converse Lyapunov result in Theorem~\ref{thm:Converse}, and the more ``intuitive'' property of being UGAS (resp. $\text{UGES}_\rho$) w.r.t.~$\cS_{\ad}(\tau,N_0)$ for all $N_0\in \N$. The ``only if'' part of the conjecture has already been proved in Item~\emph{(1)} in Lemma~\ref{Lemma:StrongConsequences}. 
\begin{figure}[h!]
\vspace{1cm}
\centering
\begin{tikzcd}
 {\text{UGAS w.r.t.~}  {\mathcal{S}}_{\text{dw}}(\tau)}&{}& {\text{UGAS w.r.t.~}  {\mathcal{S}}_{\text{adw}}(\tau,N_0),\;\;\forall\;N_0\in \N } & {} & {\GAS} \\
 {\exists \;\text{ LFs as in~Prop.~\ref{prop:ConverseDwellTime}}}&{}& {} && {\exists \;\text{ LFs as in~Prop.~\ref{prop:AneelConditions}}}
  \arrow["\text{ Subsec.~\ref{subsec:Counter}}"{pos=0.6},shift left=2, Leftarrow, from=1-3, to=1-1,"\;/" marking]
  \arrow["\text{Lemma~\ref{Lemma:StrongConsequences}}"{pos=0.4},shift left=1, shorten <=6pt, shorten >=18pt, Rightarrow, from=1-5, to=1-3]
  \arrow["{\text{(?) Conj.~\ref{conj:Conje}}}"{pos=0.55}, shift left=2, shorten <=18pt, shorten >=6pt,dashed, from=1-3, to=1-5, " \text{ }" marking]{rr}{}
  \arrow["{\text{Theorem~\ref{thm:Converse}}}"', shift right=3, Leftrightarrow, from=2-5, to=1-5]
  \arrow["{\text{ Prop.~\ref{prop:ConverseDwellTime}}}"{}, shift left=0.5, Leftrightarrow, from=1-1, to=2-1]
  \arrow["",shift right=-2, Leftarrow, from=1-1, to=1-3]
\end{tikzcd}
\caption{Schematic collection of the main results for the nonlinear case. The notation ``LFs'' stands for ``Lyapunov functions''. The linear-case scheme is completely equivalent, \emph{mutatis mutandis}, replacing  UGAS by $\text{UGES}_\rho$ and $\GAS$ by $\ES$.}  
\vspace{0.4cm}
    \label{fig:dfsfds}
\end{figure}
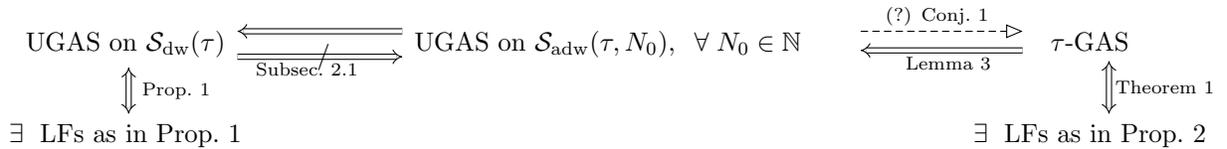

Summarizing our results, we highlight the relations between different properties in Figure~\ref{fig:dfsfds}. One can see that, as a by-product, we observe that the existence of functions as in Proposition~\ref{prop:AneelConditions} (resp. norms as in Corollary~\ref{cor:ConverseLinear} for the linear case) implies the existence of functions (resp. norms) as in~Proposition~\ref{prop:ConverseDwellTime}.

To conclude, this manuscript characterized stability for switched nonlinear systems under average dwell-time constraints, establishing necessary and sufficient conditions in terms of multiple Lyapunov functions. The performed analysis highlighted the presence of a strict gap between stability for dwell-time switching signals and stability for \emph{average} dwell-time constrained switching signals, as demonstrated through a counterexample. Building on these insights, we developed a converse result for average dwell-time constrained systems, presenting inequalities independent of the subsystem flow maps, thereby facilitating easier verification. Additionally, we examined the particular case of linear switched systems, deriving a corollary from our main result. This study enhances the theoretical understanding of stability in switched nonlinear systems, offering valuable insights with potential implications for practical applications.

\section*{Acknowledgements}
 The authors would like to thank anonymous reviewers for their helpful comments in improving the presentation of the manuscript. The authors are also grateful to  Sigurdur Hafstein for his help with the numerical simulations reported in Figure~\ref{fig:Parameters}.

\bibliography{biblio} 
 \bibliographystyle{plain}

\appendix

\section{Concatenation of Switching Signals}\label{app:Concat}
In this section, we collect basic definitions and results concerning concatenation of signals.
\begin{defn}[Concatenation]
Given any finite index set $\cI$, consider $\sigma, \gamma \in \cS$ defined in~\eqref{eq:arbitrarySwitching}. Given any $\delta> 0$ we define $\sigma \diamond_\delta\gamma\in \cS$ by
\[
\sigma \diamond_\delta \gamma(t):=\begin{cases}
\sigma(t)\;\;\;\;&\text{if }t<\delta,\\
\gamma(t-\delta)\;\;\;\;&\text{if }t\geq \delta.
\end{cases}
\]
\end{defn}
We now provide bounds on the number of discontinuity points of a concatenation of signals.
\begin{lemma}\label{lemma:Concatenation}
Let us consider $\sigma,\gamma\in \cS$. For any $\delta>0$ and any $0\leq s\leq t$, it holds that
\begin{equation}\label{eq:Concatenation1}
 N_{\sigma\diamond_\delta\gamma}(s,t) \leq N_\sigma(s,\delta)+N_\gamma(0, t-\delta).
\end{equation}
Moreover, considering any $0\leq \delta\leq t$, the following holds:
\begin{equation}\label{eq:Concatenation2}
 N_{\sigma\diamond_\delta\gamma}(0,t)=\begin{cases}
N_\sigma(0,\delta)+N_\gamma(0, t-\delta)-1\;\;\;&\text{if } \lim_{r\to \delta^-}\sigma(r)=\gamma(0),\\
N_\sigma(0,\delta)+N_\gamma(0, t-\delta)\;\;\;&\text{if } \lim_{r\to \delta^-}\sigma(r)\neq \gamma(0).
 \end{cases}
\end{equation}

\end{lemma}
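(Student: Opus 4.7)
The plan is to argue by direct enumeration of the discontinuity set of the concatenated signal. The key observation, once the convention $t^\mu_0:=0$ is applied uniformly to every signal $\mu\in\cS$, is that the discontinuity points of $\sigma\diamond_\delta\gamma$ consist of: (i) the point $0$ (by convention), (ii) the genuine discontinuity points of $\sigma$ lying in $(0,\delta)$, (iii) the point $\delta$ if and only if $\lim_{r\to\delta^-}\sigma(r)\neq\gamma(0)$, and (iv) the shifted points $\delta+t^\gamma_k$ for $k\geq 1$. This description follows immediately from the piecewise definition of $\diamond_\delta$ together with the right-continuity of signals in $\cS$. All subsequent computations are carried out against this list.

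For the inequality~\eqref{eq:Concatenation1}, I would split on the position of $[s,t)$ relative to $\delta$. If $t\leq\delta$, the concatenation agrees with $\sigma$ on $[s,t)$, so $N_{\sigma\diamond_\delta\gamma}(s,t)=N_\sigma(s,t)\leq N_\sigma(s,\delta)$, and the claim is immediate. If $s\geq\delta$, the concatenation equals $\gamma(\cdot-\delta)$ on $[s,t)$, so $N_{\sigma\diamond_\delta\gamma}(s,t)$ counts genuine jumps of $\gamma$ in $[s-\delta,t-\delta)$ and is therefore bounded by $N_\gamma(0,t-\delta)$. The straddling case $s\leq\delta\leq t$ is the core one: using additivity of the counting measure over $[s,\delta)\cup[\delta,t)$, one obtains
\[
N_{\sigma\diamond_\delta\gamma}(s,t)=N_\sigma(s,\delta)+\#\bigl\{\text{discontinuities of }\sigma\diamond_\delta\gamma\text{ in }[\delta,t)\bigr\},
\]
and the second summand is at most $1+\#\{\text{genuine jumps of }\gamma\text{ in }(0,t-\delta)\}=N_\gamma(0,t-\delta)$, with the ``$+1$'' accounting for the possible jump at $\delta$. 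This yields~\eqref{eq:Concatenation1}.

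For the identity~\eqref{eq:Concatenation2} (the case $s=0$, $\delta\leq t$), I would just carry the same enumeration exactly. The quantity $N_\sigma(0,\delta)$ supplies the convention point at $0$ plus the genuine jumps of $\sigma$ in $(0,\delta)$; the quantity $N_\gamma(0,t-\delta)$ supplies a second convention point plus the genuine jumps of $\gamma$ in $(0,t-\delta)$. The concatenation, however, is entitled to only one convention point (at $0$), while the point $\delta$ contributes to its discontinuity count precisely when $\lim_{r\to\delta^-}\sigma(r)\neq\gamma(0)$. Comparing the two tallies gives the equality with no correction in the jump case and the $-1$ correction in the matching-limits case.

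The only subtle point of the whole argument is the bookkeeping around the glueing point $\delta$: one must not double-count the convention $t^\gamma_0=0$ as a discontinuity of $\sigma\diamond_\delta\gamma$ at $\delta$, because that designation is intrinsic to $\gamma$ as a standalone signal and does not survive the concatenation when the function value does not actually change. Isolating this single effect produces both the slack in~\eqref{eq:Concatenation1} and the dichotomy in~\eqref{eq:Concatenation2}; everything else reduces to elementary counting.
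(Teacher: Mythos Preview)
Your proposal is correct and follows essentially the same approach as the paper: a case split on the position of $[s,t)$ relative to $\delta$, additivity of the counting over $[s,\delta)\cup[\delta,t)$, and the same bookkeeping around the convention point $t_0^\gamma=0$ at the glueing time $\delta$ to obtain the $-1$ correction in~\eqref{eq:Concatenation2}. If anything, your enumeration of the discontinuity set of $\sigma\diamond_\delta\gamma$ at the outset makes the argument slightly more explicit than the paper's version.
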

\begin{proof}
Let us prove~\eqref{eq:Concatenation1}, we consider $s\leq t$. The cases $s\leq t\leq \delta$ and $\delta\leq s\leq t$  are straightforward. Indeed, suppose $s\leq t\leq \delta$, we have $N_{\sigma\diamond_\delta \gamma}(s,t)=N_{\sigma}(s,t)$. In the case $\delta\leq s\leq t$ we have $N_{\sigma\diamond_\delta \gamma}(s,t)=N_{\gamma}(s-\delta,t-\delta)\leq N_\gamma(0,t-\delta)$. Suppose then $s<\delta<t$, from the cases studied above, we have
\[
N_{\sigma\diamond_\delta \gamma}(s,t)=N_{\sigma\diamond_\delta \gamma}(s,\delta)+N_{\sigma\diamond_\delta \gamma}(\delta,t)\leq N_\delta(s,\delta)+N_\gamma(0,t-\delta)
\]
concluding the proof of~\eqref{eq:Concatenation1}.
Let us prove~\eqref{eq:Concatenation2} by considering $0\leq \delta\leq t$. Let us suppose first that $\lim_{r\to \delta^-}\sigma(r)=\gamma(0)$ i.e., in a open left neighborhood of $\delta$ the signal $\sigma$ is equal to $\gamma(0)$, then
\[
N_{\sigma\diamond_\delta \gamma}(0,t)=N_{\sigma\diamond_\delta \gamma}(0,\delta)+N_{\sigma\diamond_\delta \gamma}(\delta,t)=N_{\sigma}(0,\delta)+N_{\gamma}(0,t-\delta)-1
\]
where the term $-1$ has been added since $t_0=0$ is a discontinuity point of $\gamma$ (and thus taken into account in $N_{\gamma}(0,t-\delta)$), while $\delta$ is not a discontinuity point of $\sigma\diamond_\delta \gamma$. The case $\lim_{r\to \delta^-}\sigma(r)\neq\gamma(0)$ is similar and thus left to the reader.
\end{proof}

We note that~\eqref{eq:Concatenation1} in Lemma~\ref{lemma:Concatenation} in particular proves that $\cS^{\infty}_{\ad}(\tau)$ is closed under arbitrary concatenations: if $\sigma_1\in  \cS_{\ad}(\tau,N_1)$  and $\sigma_2\in  \cS_{\ad}(\tau,N_2)$ for some $N_1,N_2\in \N$, then $\sigma_1 \diamond_\delta \sigma_2\in  \cS_{\ad}(\tau,N_1+N_2)$, for all $\delta\geq0$.
 On the other hand, $\cS_{\ad}(\tau,N_0)$ is not closed under concatenation, for any $N_0\in \N$.

\section{Proof of local Lipschitz property}\label{app:LocLip}
In this Appendix we provide the proof of local Lipschitz continuity of the functions $W_1,\dots, W_{\rm m}:\R^n\to \R$ constructed in the proof of our main converse Lyapunov result, i.e., Theorem~\ref{thm:Converse}.
\begin{lemma}\label{lemma:LocLipschitz}
Under the hypotheses of Theorem~\ref{thm:Converse}, the functions $W_i:\R^n\to \R$ defined in~\eqref{eq:DefinitionV_i} are continuous and locally Lipschitz on $\R^n\setminus \{0\}$.
\end{lemma}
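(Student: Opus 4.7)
The plan is to follow the pattern developed in~\cite[Section~5]{TeelPraly2000}, exploiting that the sup defining $W_i$ in~\eqref{eq:DefinitionV_i} is \emph{effectively} attained over a bounded range of times, over which the supremand is Lipschitz in $x$ uniformly in the switching signal. The three ingredients are: (i) a time-truncation argument based on the $\GAS$ upper bound; (ii) uniform-in-$\sigma$ Lipschitz dependence of the switched flow on the initial condition, via Gronwall; and (iii) local Lipschitzness of $\alpha_1$ on bounded intervals, which one can assume without loss of generality via the regularization of $\cK_\infty$ functions recalled in~\cite[Lemma~1]{Kellett2014} (and already invoked in the body of the proof of Theorem~\ref{thm:Converse}).

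For the time truncation, I would fix $x_0\in \R^n\setminus\{0\}$ and a closed ball $K=\overline{B}(x_0,r)\subset\R^n\setminus\{0\}$ with $r<|x_0|/2$. Using the estimate $\frac{e^{\gamma s}}{e^{\alpha\tau N_\sigma(0,s)}}\alpha_1(|\Phi_\sigma(s,x)|)\leq e^{-\varepsilon s}\eta_2(|x|)$ recorded in the body of the proof of Theorem~\ref{thm:Converse} (valid for every $x\in\R^n$, $\sigma\in\cS$, $s\in\R_+$), together with the positive uniform lower bound $W_i(x)\geq e^{-\alpha\tau}\alpha_1(|x_0|/2)=:W_\star>0$ valid on $K$, one can select $T=T(K)>0$ such that $e^{-\varepsilon s}\sup_{x\in K}\eta_2(|x|)<W_\star/2$ for all $s\geq T$. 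Consequently the supremum in~\eqref{eq:DefinitionV_i} can be restricted to $s\in[0,T]$ for every $x\in K$, without changing the value of $W_i(x)$.

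For step (ii), I would rely on the fact that, for any compact annulus $A\subset \R^n\setminus\{0\}$, each $f_i$ is Lipschitz on $A$ with some common constant $L$. As long as the trajectories $\Phi_\sigma(s,x)$ for $x\in K$ and $s\in[0,T]$ remain inside such an $A$, Gronwall's inequality gives $|\Phi_\sigma(s,x)-\Phi_\sigma(s,y)|\leq e^{Ls}|x-y|$; crucially, this bound is invariant under concatenation of pieces, so the constant $e^{LT}$ is \emph{independent} of the number of switches $N_\sigma(0,s)$. Composing this with the local Lipschitz continuity of $\alpha_1$ on the bounded image of the trajectory family (controlled by the $\GAS$ upper bound) and the boundedness of the scaling factor $\frac{e^{\gamma s}}{e^{\alpha\tau N_\sigma(0,s)}}$ for $s\in[0,T]$, one obtains a Lipschitz estimate on the supremand that is uniform in $(\sigma,s)$. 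Since taking $\sup$ preserves Lipschitz constants, this yields the desired local Lipschitzness of $W_i$ on $\R^n\setminus\{0\}$; continuity at the origin is immediate from the sandwich bound $\wt\alpha_1(|x|)\leq W_i(x)\leq \wt\alpha_2(|x|)$ established in the body of the proof of Theorem~\ref{thm:Converse}.

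The main obstacle will be ensuring that trajectories starting in $K$ remain inside a fixed compact annulus in $\R^n\setminus\{0\}$, uniformly over $\sigma\in\cS$ and $s\in[0,T]$. The upper bound is immediate from the $\GAS$ estimate, but a \emph{uniform lower bound} on $|\Phi_\sigma(s,x)|$, bounded away from zero, is delicate: one must prevent trajectories from approaching the origin faster than a controllable rate, despite the fact that the vector fields $f_i$ may fail to be Lipschitz (even continuous) there. A clean way to handle this is to invoke forward uniqueness together with $f_i(0)=0$ to exclude reaching the origin in finite time, and then to upgrade this to a quantitative lower bound by a compactness/contradiction argument based on continuity of each flow $\Phi_i(s,\cdot)$ on $\R^n\setminus\{0\}$ and the fact that $\sigma|_{[0,T]}$ uses only finitely many modes. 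Once the existence of such a compact annulus containing the relevant trajectories is secured, the three-step programme outlined above yields the claim.
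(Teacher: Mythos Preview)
Your three-step programme---time truncation via the $\GAS$ decay, a Gronwall estimate for the switched flow uniform in $\sigma$, and local Lipschitzness of $\alpha_1$---is precisely the route the paper takes, and you have correctly isolated the one genuine obstacle: keeping trajectories uniformly away from the origin, where the $f_i$ need not be Lipschitz.

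The gap is in your resolution of that obstacle. Forward uniqueness together with $f_i(0)=0$ does \emph{not} exclude reaching the origin in finite time; it only forces a solution to remain at $0$ once it arrives. The paper's own example $f(x)=-\mathrm{sgn}(x)\in\sol(\R)$ makes this concrete: from $x_0>0$ the solution $x(t)=x_0-t$ hits $0$ at $t=x_0$ and stays there. Excluding finite-time arrival would require \emph{backward} uniqueness at $0$, which Assumption~\ref{Asssum:WellPosedness} does not impose. Consequently your compactness/contradiction argument for a uniform positive lower bound on $|\Phi_\sigma(s,x)|$ over all $\sigma\in\cS$ and $s\in[0,T]$ cannot succeed in general, and without it the Gronwall step---which needs a Lipschitz constant for the $f_i$ along the entire path, not just at the terminal time---is unsupported. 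The paper handles this differently: it first treats ``origin not reached in finite time'' as an explicit \emph{simplifying hypothesis} (not a consequence of well-posedness), shows under it that the closed reachable set $\overline{\cR^{\leq T_\cU}(\cU)}$ is a compact subset of $\R^n\setminus\{0\}$ (invoking~\cite[Proposition~5.1]{LinSontag96}), and then runs your steps (ii)--(iii) on that set. For the general case it replaces the uniform horizon by a signal-dependent one, $T_2(x,\sigma):=\sup\{t\geq 0:\ \text{supremand at }t>\alpha_1(|x|)\}$, and indicates that the argument adapts. The correct fix to your outline is therefore to split cases as the paper does, rather than to try to derive non-arrival from forward uniqueness.
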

\begin{proof}[Sketch of the proof]
The proof follows by adapting the arguments in~\cite{TeelPraly2000} and~\cite{LinSontag96} to our case.
We first assume for simplicity that, for the dynamical systems defined by $f_1,\dots f_{\rm m}$, the $\{0\}$ equilibrium is not reached in finite time, i.e., for any $x\in \R^n\setminus \{0\}$, for any $t\in \R_+$, we have
$\Phi_i(t,x)\neq 0$. This assumption simplifies the proof but can be easily relaxed as we suggest at the end of the proof.\\
Let us fix $i\in \cI$, and let us recall that we defined $W_i:\R^n\to \R$ by
\[
W_i(x):=\sup_{\sigma\in \cS_i}\sup_{s\geq 0} \frac{e^{\gamma s}}{e^{\alpha \tau N_\sigma(0,s)}}\alpha_1(|\Phi_\sigma(s,x)|).
\]
We have already proved that $W_i$ satisfies~\eqref{eq:Sandwich-NonLinearBB} with $\wt \alpha_1=\frac{1}{e^{\alpha\tau}}\eta_1^{-1}$ and $\wt \alpha_2\equiv \eta_2$, for certain $\eta_2\in \cK_\infty$ and $\eta_1\in \cK_\infty\cap \cC^1(\R_+\setminus \{0\},\R)$ such that $\eta_1'(s)>0$ for all $s\in \R_+\setminus \{0\}$.
First of all, for any $x\in \R^n\setminus \{0\}$, let us introduce
\begin{equation}\label{eq:TUno}
T_1(x):=\varepsilon^{-1}\left(1-\ln\Big(\frac{W_i(x)}{\eta_2(|x|)}\Big)\right ),
\end{equation}
then, we have 
\begin{equation}\label{eq:FiniteIntervalReduction}
W_i(x)=\sup_{\sigma\in \cS_i}\max_{s\in [0,T_1(x)]} \frac{e^{\gamma s}}{e^{\alpha \tau N_\sigma(0,s)}}\alpha_1(|\Phi_\sigma(s,x)|).
\end{equation}
Indeed, recalling that by~\eqref{eq:BoundSolutionProof} we have $\frac{e^{\gamma t}}{e^{\alpha \tau N_\sigma(0,t)}}\alpha_1(|\Phi_\sigma(t,x)|)\leq \eta_2(|x|)e^{-\varepsilon t}$, $\forall \sigma \in \cS,\;\forall x\in \R^n$ and $\forall t\in \R_+$, we obtain
\[
\begin{aligned}
W_i(x)&=\max\left\{\sup_{\sigma\in \cS_i}\max_{s\in[0,T_1(x)]} \frac{e^{\gamma s}}{e^{\alpha \tau N_\sigma(0,s)}}\alpha_1(|\Phi_\sigma(s,x)|),\;\;\sup_{\sigma\in \cS_i}\sup_{s\geq T_1(x)} \frac{e^{\gamma s}}{e^{\alpha \tau N_\sigma(0,s)}}\alpha_1(|\Phi_\sigma(s,x)|) \right \}
\\& \leq \max\left\{\sup_{\sigma\in \cS_i}\max_{s\in[0,T_1(x)]} \frac{e^{\gamma s}}{e^{\alpha \tau N_\sigma(0,s)}}\alpha_1(|\Phi_\sigma(s,x)|),\;\;\eta_2(|x|)e^{-\varepsilon T_1(x)} \right \}
\\& \leq \max\left\{\sup_{\sigma\in \cS_i}\max_{s\in[0,T_1(x)]} \frac{e^{\gamma s}}{e^{\alpha \tau N_\sigma(0,s)}}\alpha_1(|\Phi_\sigma(s,x)|),\;\;\frac{1}{e}W_i(x) \right \},
\end{aligned}
\]
from which we obtain~\eqref{eq:FiniteIntervalReduction}. Given an arbitrary $x\in \R^n\setminus \{0\}$, we consider a compact neighborhood of $x$, denoted by $\cU$ and we suppose $\cU\subset \R^n\setminus \{0\}$.
Let us denote by 
\[
T_\cU=\max_{y\in \cU}\varepsilon^{-1}\left(1-\ln\Big(\frac{\wt \alpha_1(|y|)}{\eta_2(|y|)}\Big)\right )
\] 
which is well-defined by continuity of $\wt \alpha_1$ and $\eta_2$.
Since,  by~\eqref{eq:Sandwich-NonLinearBB} $W_i(z)\geq \wt \alpha_1(|z|)$ for every $z\in \R^n$ and recalling~\eqref{eq:TUno}, we have that $T_1(y)\leq T_\cU$ for every $y\in \cU$. Let us then define
\[
K=\overline {\cR^{\leq T_\cU}(\cU)}:=\overline{\{z=\Phi_\sigma(t,y)\;\vert\;\sigma\in \cS_i,\;y\in \cU,\;\;t\in [0,T_\cU]\}}.
\]
By forward completeness and by the hypothesis that $0$ is never reached in finite time, it can be proved that $K\subset \R^n\setminus \{0\}$ is compact, see also~\cite[Proposition 5.1]{LinSontag96}. For any $j\in \cI$, let us define $L_j\geq 0$ as the Lipschitz constant of $f_j$ in $K$ and consider $L:=\max_{j\in \cI}L_j$. By continuous dependence of initial conditions (see \cite[Theorem 3.4]{khalil2002nonlinear}), we have that, for every $y\in \cU$, any $\sigma\in \cS_i$ and any $t\leq T_\cU$, we have
\[
|\Phi_\sigma(t,y)-\Phi_\sigma(t,x)|\leq e^{Lt}|y-x|\leq e^{LT_\cU}|y-x|.
\]
From now on, we define $R=e^{LT_\cU}$; moreover, let us call $Q:=\max_{y\in K}\alpha_1'(|y|)$; we have $Q\in (0,+\infty)$ since $\alpha_1=\eta_1^{-1}\in \cK_\infty\cap \cC^1(\R_+\setminus \{0\},\R)$. Let us consider any $y\in \cU$, we have
\[
\begin{aligned}
W_i(y)&=\sup_{\sigma\in \cS_i}\max_{s\in [0,T_\cU]} \frac{e^{\gamma s}}{e^{\alpha \tau N_\sigma(0,s)}}\alpha_1(|\Phi_\sigma(s,y)|)\leq\sup_{\sigma\in \cS_i}\max_{s\in [0,T_\cU]} \frac{e^{\gamma s}}{e^{\alpha \tau N_\sigma(0,s)}}\left (\alpha_1(|\Phi_\sigma(s,x)|) +\alpha_1'(|\Phi_\sigma(s,x)|)R|x-y|\right) \\&\leq \sup_{\sigma\in \cS_i}\max_{s\in [0,T_\cU]} \frac{e^{\gamma s}}{e^{\alpha \tau N_\sigma(0,s)}}\left (\alpha_1(|\Phi_\sigma(s,x)|)+QR|x-y|\right)\\
&\leq  \sup_{\sigma\in \cS_i}\max_{s\in [0,T_\cU]} \frac{e^{\gamma s}}{e^{\alpha \tau N_\sigma(0,s)}}\alpha_1(|\Phi_\sigma(s,x)|)\,+\,e^{\gamma T_\cU}QR|x-y|=W_i(x)+A|x-y|
\end{aligned}
\]
with $A=e^{\gamma T_\cU}QR$. With a similar reasoning, one can conclude that $W_i(y)\geq W_i(x)-A|x-y|$, proving that $W_i$ is Lipschitz continuous in $\cU$.
By arbitrariness of $x\in \R^n\setminus \{0\}$ we conclude that $W_i$ is locally Lipschitz on $\R^n\setminus \{0\}$.
Continuity then trivially follows by~\eqref{eq:Sandwich-NonLinearBB}.
\\
In the case where the equilibrium $\{0\}$ is possibly reached in finite time, one has to consider, given any $\sigma\in \cS_i$, the time
\[
T_2(x,\sigma)=\sup\left \{t\geq 0\;:\;\frac{e^{\gamma t}}{e^{\alpha \tau N_\sigma(0,s)}}\alpha_1(|\Phi_\sigma(s,y)|)>\alpha_1(|x|)\right \},
\] 
with the convention that $\sup\emptyset= 0$. If $\left \{t\geq 0\;:\;\frac{e^{\gamma t}}{e^{\alpha \tau N_\sigma(0,s)}}\alpha_1(|\Phi_\sigma(s,y)|)>\alpha_1(|x|)\right \}\neq \emptyset$, then $T_2(x,\sigma)$ is bounded (uniformly with respect to $\sigma$), since $\frac{e^{\gamma t}}{e^{\alpha \tau N_\sigma(0,s)}}\alpha_1(|\Phi_\sigma(s,y)|)\leq e^{-\varepsilon t}\eta_2(|x|)$, for any $\sigma\in \cS_i$, any $x\in \R^n\setminus \{0\}$, any $t\geq 0$. Then, we have
\[
W_i(x)=\sup_{\sigma\in \cS_i}\max_{s\in [0,T_2(x,\sigma)]} \frac{e^{\gamma s}}{e^{\alpha \tau N_\sigma(0,s)}}\alpha_1(|\Phi_\sigma(s,x)|),
\]
and adapting the reasoning from the previous case, the claim can be proved, \emph{mutatis mutandis}.
\end{proof}}
\end{document}